\newcommand{\R}{\mathbb{R}}
\newcommand{\N}{\mathbb{N}}
\newcommand{\ee}{\mathrm{e}}
\DeclareDocumentCommand\dd{ o g d() }{
	\IfNoValueTF{#2}{
		\IfNoValueTF{#3}
			{\mathrm{d}\IfNoValueTF{#1}{}{^{#1}}}
			{\mathinner{\mathrm{d}\IfNoValueTF{#1}{}{^{#1}}\argopen(#3\argclose)}}
		}
		{\mathinner{\mathrm{d}\IfNoValueTF{#1}{}{^{#1}}#2} \IfNoValueTF{#3}{}{(#3)}}
	}
\newcommand{\del}{\partial}
\newcommand{\W}{\mathscr{W}}
\DeclareMathOperator{\dist}{dist}
\DeclareMathOperator{\id}{Id}
\newcommand{\vcc}{\vcentcolon}
\DeclarePairedDelimiter\abs{\lvert}{\rvert}
\DeclarePairedDelimiter\norm{\Vert}{\rVert}
\theoremstyle{plain}
\newtheorem{theorem}{Theorem}[section]
\newtheorem{lemma}[theorem]{Lemma}
\newtheorem{proposition}[theorem]{Proposition}
\theoremstyle{definition}
\theoremstyle{remark}
\newtheorem{remark}[theorem]{Remark}
\newtheorem{example}[theorem]{Example}
\title{Mean field limit for interacting systems on co-evolving networks}
\author{Sebastian Throm \thanks{\texttt{sebastian.throm@umu.se}}}
 \affil{\em Ume{\aa} University,  Department of Mathematics and Mathematical Statistics, 901 87 Umeå }
\date{}
\begin{document}

\maketitle

\begin{abstract}
Interacting particle systems are in frequent use to model collective behaviour in various situations and applications. For many systems, the interaction between the agents is restricted to an underlying network structure and often, the latter also evolves in time with its dynamics coupled to the evolution of the particles. Due to their relevance for applications such systems on adaptive or co-evolutionary networks have received increasing interest in recent years. In particular, a fundamental question concerns the behaviour of the system in the infinite particle limit. In this work we provide a mean-field description for a general particle system which exhibits non-locality in time (memory). The result applies particularly to a large class of systems on co-evolving networks including non-linear weight dynamics.
\end{abstract}

\section{Introduction}

Collective behaviour of large multi-agent systems can be observed in many real-world situations and consequently, there is a strong interest in deriving and analysing corresponding mathematical models \cite{BCK20,CKJ02,DVN24,NPT10,WHK22}. A common modelling approach consists in considering the agents in the system as point masses (particles) and neglect their spatial dimensions. In situations where the number of particles is large, one is often not interested in the precise evolution of a single particle but rather the averaged behaviour of the system when the number of particles tends to infinity. This leads to the derivation of continuum descriptions for the a priori discrete systems and under the assumption of all-to-all coupling between the particles there is a well established theory available on the derivation of \emph{mean-field equations} \cite{BrH77,NeW74,Dob79a,Gol16}. More precisely, if $z_k(t)\in\R^d$ describes the state of the $k$'th particle at time $t$ the \emph{empirical measure} is given by
\begin{equation*}
 \mu^N_t=\sum_{k=1}^{N}\delta_{z_k(t)}
\end{equation*}
where $N$ is the total number of particles and $\delta$ denotes the Dirac measure. Under suitable assumption of the (pairwise) interaction of the particles one is interested in the limit $\mu_t=\lim_{N\to\infty}\mu^N_t$ of the empirical measure.

Many systems in real world situations lack an all-to-all coupling but rather interact according to an underlying network structure where the particles are located at the nodes of a network/graph and interaction between particles only takes place if they are directly linked via an edge. In recent years there has been a strong interest in studying such systems and particularly to generalise the concept of deriving their mean-field limit \cite{BBV08,PoG16,Lu20,CDG20,MeM21,WHK22,Bur22,KHT22,DDZ22,BCW23,KuT19,MeM21}. A fundamental ingredient is a continuous description of the underlying network in the limit $N\to\infty$. Based on the concept of \emph{graphons} and \emph{graph convergence}, there has been pioneering work mimicking the classical derivation of mean-field equations \cite{ChM16,KaM18}. More recently, this has been further developed to include more general graph structures \cite{GkK22,JPS25,KuX22a}. These approaches are however limited to \emph{static networks} which are fixed for all times.

On the other hand, many important applications exhibit networks which change dynamically. Often, this dynamics is also coupled to the evolution of the particles and the networks are denoted \emph{adaptive} or \emph{co-evolving} \cite{BGK23a,GDB06,NGW23a,Bur22,SBL23,RuP23,DBB23,JuM23,GKX25}. Such systems naturally exhibit more complex dynamics which makes their analysis rather challenging. In particular, the co-evolving structure induces non-local/memory effects (see below).

\subsection{The model}

We will derive a mean-field description for interacting particle systems with a general underlying co-evolving network structure, i.e.\@ which evolves in time with its dynamics coupled to the particle system itself. More precisely, we consider a system of $N$ particles. The state of the $k$th particle at time $t$ is described by $\phi^k_t$. Moreover, the coupling of the particles is described by the adjacency matrix $(w_{kl})\in \R^{N\times N}$ of the underlying network whose entries describe the coupling strength between the $k$th and $\ell$th particle. We assume in particular, that $w_{k\ell}$ evolves in time as well and the dynamics is driven by the current state and the evolution of the states $\phi^k_t$ and $\phi^\ell_t$. This leads to the following coupled ODE system 

\begin{equation}\label{eq:disc:model:1}
 \begin{split}
  \dot{\phi}^k_t&=\frac{1}{N}\sum_{\ell=1}^{N}w_{k\ell}C(\phi^k_t,\phi^\ell_t)\\
  \dot{w}_{k\ell}&=F(w_{k\ell},\phi^k_t,\phi^\ell_t).
 \end{split}
\end{equation}
Here $C\colon \R^d\times \R^d \to \R$ as well as $F\colon \R^{N\times N}\times \R^d\times \R^d\to \R$ are sufficiently regular functions while the system is complemented with suitable initial data.

Our primary goal consists in deriving a generalisation of the \emph{mean-field limit}, i.e.\@ a continuous limit approximation of the particle dynamics as $N\to\infty$. More precisely, the basic idea is to look at the sequence of empirical measures
\begin{equation}\label{eq:emp:measure:1}
 \mu^N_t=\frac{1}{N}\sum_{k=1}^{N}\delta_{\phi_t^k}
\end{equation}
and study the corresponding limit $N\to\infty$. The main challenge consists in the limiting behaviour of the underlying network structure. More precisely, we have to deal on the one hand with the fact that the weights $w_{k\ell}$ evolve depending on the system dynamics. Moreover, we require a continuous approximation of the a priori discrete network structure (and particularly its initial state) as $N\to \infty$. For the second problem, we will rely on the theory of \emph{graphons} and \emph{graph convergence} which have been frequently employed in similar contexts for static networks in recent works and we follow mainly \cite{ChM16,KaM18}. In particular, we restrict ourselves to a relatively simple situation compared to more advanced notions of graph limits here to keep the technical problems on this aspect to a minimal level. In fact our main focus is on the first challenge concerning the adaptivity. To deal with this, we note that, assuming sufficient regularity for $\phi^k$ and $\phi^\ell$, we consider the equation 
\begin{equation}\label{eq:network:dyn:1}
 \dot{w}_{k\ell}=F(w_{k\ell},\phi^k_t,\phi^\ell_t); \qquad w_{k\ell}(t_0)=w_{k\ell}^{\text{in}}
\end{equation}
with $\phi^k$ and $\phi^\ell$ as (non-local) parameters. If $F$ is regular enough, we can solve this equation which provides us with a flow map 
\begin{equation}\label{eq:network:flow}
 \Phi_{t_0,t}[w_{k\ell}^{\text{in}},\phi^k|_{[t_0,t]},\phi^{\ell}|_{[t_0,t]}].
\end{equation}
More precisely, we have the following statement.
\begin{lemma}\label{Lem:weight:dyn}
 Let $F\colon[0,T]\times \R\times \R^d\times \R^d\to\R$ be continuous with respect to the first and globally Lipschitz continuous with respect to the remaining components with Lipschitz constant $L_F$, i.e.\@ 
 \begin{equation*}
  \abs{F_t(w,a,b)-F_t(v,\tilde{a},\tilde{b}}\leq L_F(\abs{w-v}+\abs{a-\tilde{a}}+\abs{b-\tilde{b}}).
 \end{equation*}
 Then for any given $\gamma,\tilde{\gamma}\in C([0,T],\R^d)$ the equation
 \begin{equation*}
  \dot{w}_t=F_t(w_t,\gamma_t,\tilde{\gamma}_t) \qquad w(0)=w_0
 \end{equation*}
has a unique solution $w[\gamma,\tilde{\gamma}]$. In particular, $w$ is Lipschitz continuous with respect to $\gamma$ and $\tilde{\gamma}$ with Lipschitz constant $\ee^{L_FT}$.
\end{lemma}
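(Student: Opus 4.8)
The plan is to treat the given equation as an ODE Cauchy problem with $(\gamma,\tilde\gamma)$ as parameters: first establish existence and uniqueness by a Picard--Lindel\"of / Banach fixed point argument, and then read off the Lipschitz dependence on $(\gamma,\tilde\gamma)$ from Gronwall's inequality. Throughout, the initial value $w_0$ and the horizon $T>0$ are held fixed, and only $\gamma,\tilde\gamma$ vary over $C([0,T],\R^d)$.

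For existence and uniqueness I would fix $\gamma,\tilde\gamma$ and introduce $G(t,w)\vcc=F_t(w,\gamma_t,\tilde\gamma_t)$. Since $F$ is continuous in its first argument and, being globally Lipschitz, also continuous in the remaining arguments, and $t\mapsto(\gamma_t,\tilde\gamma_t)$ is continuous, the map $G$ is jointly continuous on $[0,T]\times\R$; moreover the hypothesis on $F$ gives $\abs{G(t,w)-G(t,v)}\le L_F\abs{w-v}$ for all $t\in[0,T]$, i.e.\@ $G$ is globally Lipschitz in $w$, uniformly in $t$. The classical Picard--Lindel\"of theorem then yields a unique $w=w[\gamma,\tilde\gamma]\in C^1([0,T],\R)$ with $\dot w_t=G(t,w_t)$ and $w(0)=w_0$; since the Lipschitz bound is global there is no finite-time blow-up, so the solution is defined on all of $[0,T]$. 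Equivalently, one applies the Banach fixed point theorem to the operator $w\mapsto\bigl(t\mapsto w_0+\int_0^t G(s,w_s)\,\dd s\bigr)$ on $C([0,T],\R)$ (well defined because $s\mapsto G(s,w_s)$ is continuous) using the equivalent norm $\norm{w}_\lambda\vcc=\sup_{t\in[0,T]}\ee^{-\lambda t}\abs{w_t}$ with any $\lambda>L_F$, which turns the operator into a contraction on the whole interval at once.

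For the Lipschitz estimate I would take two parameter pairs $(\gamma,\tilde\gamma)$ and $(\sigma,\tilde\sigma)$, denote by $w$ and $v$ the corresponding solutions with the same initial value $w_0$, subtract their integral formulations, and use the Lipschitz hypothesis on $F$ to obtain $\abs{w_t-v_t}\le L_F\int_0^t\abs{w_s-v_s}\,\dd s+L_F t\,D$ for $t\in[0,T]$, with $D\vcc=\norm{\gamma-\sigma}_\infty+\norm{\tilde\gamma-\tilde\sigma}_\infty$. Passing to the differential inequality $\frac{\dd}{\dd t}\abs{w_t-v_t}\le L_F\abs{w_t-v_t}+L_F D$ with vanishing datum at $t=0$ and multiplying by the integrating factor $\ee^{-L_F t}$ gives $\abs{w_t-v_t}\le(\ee^{L_F t}-1)D\le\ee^{L_F T}D$; taking the supremum over $t\in[0,T]$ yields the asserted bound $\norm{w[\gamma,\tilde\gamma]-w[\sigma,\tilde\sigma]}_\infty\le\ee^{L_F T}\bigl(\norm{\gamma-\sigma}_\infty+\norm{\tilde\gamma-\tilde\sigma}_\infty\bigr)$ (in fact with the slightly better constant $\ee^{L_F T}-1$).

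I do not expect a genuine conceptual obstacle, as both halves are textbook. The two points that require a little care are: (i) verifying that the composed field $G$ really inherits the joint continuity and the uniform-in-$t$ global Lipschitz bound needed to invoke Picard--Lindel\"of (this is exactly where continuity of $F$ in time and of $\gamma,\tilde\gamma$ enters); and (ii) using the differential form of Gronwall's inequality rather than its coarser integral form, since the latter would only produce the suboptimal constant $L_F T\,\ee^{L_F T}$ instead of the advertised $\ee^{L_F T}$.
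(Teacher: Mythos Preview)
Your proposal is correct and follows essentially the same route as the paper: Banach's fixed-point theorem with the exponentially weighted sup-norm for existence and uniqueness, followed by subtracting the integral formulations and invoking Gronwall for the Lipschitz dependence on $(\gamma,\tilde\gamma)$. Your extra care in point (ii)---using the differential Gronwall (equivalently, the integrating-factor trick) to recover the claimed constant $\ee^{L_FT}$ rather than the cruder $L_FT\,\ee^{L_FT}$---is a nice sharpening that the paper leaves implicit when it simply writes ``we conclude by means of Gronwall's inequality.''
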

\begin{proof}
 We sketch the proof which follows by a straightforward application of Banach's fixed point theorem on the space $C([0,T],\R^d)$ equipped with the norm $\norm{w}\vcc=\sup_{t\in[0,T]}\ee^{-\alpha t}\abs{w(t)}$ with a constant $\alpha>L_F$. In fact the problem can be reformulated as the fixed-point problem
 \begin{equation*}
  w(t)=w_0+\int_{0}^{t}F_{s}(w,\gamma_s,\tilde{\gamma}_s)\dd{s}=\vcc \mathcal{F}[w]
 \end{equation*}
and one immediately checks that $\mathcal{F}\colon C([0,T],\R^d)\to C([0,T],\R^d)$ is a contraction since $\alpha>L_F$. This provides a unique solution $w[\gamma,\tilde{\gamma}]\in C([0,T],\R^d)$. The Lipschitz-continuity with respect to $\gamma$ and $\tilde{\gamma}$ follows from the corresponding Lipschitz continuity of $F$ and Gronwall's inequality. In fact, we have
\begin{multline*}
 \abs{w_t[\gamma,\tilde{\gamma}]-w_t[\rho,\tilde{\rho}]}=\abs[\bigg]{\int_{0}^{t}F_s(w_s[\gamma,\tilde{\gamma}],\gamma_s,\tilde{\gamma}_s)-F_s(w_s[\rho,\tilde{\rho}],\rho_s,\tilde{\rho}_s)\dd{s}}\\*
 \leq L_F\int_{0}^{t}\abs{w_s[\gamma,\tilde{\gamma}]-w_s[\rho,\tilde{\rho}]}+\abs{\gamma_s-\rho_s}+\abs{\tilde{\gamma}_s-\tilde{\rho}_s}\dd{s}\\*
 \leq L_F\int_{0}^{t}\abs[\big]{w_s[\gamma,\tilde{\gamma}]-w_s[\rho,\tilde{\rho}]}\dd{s}+L_F t\bigl(\norm{\gamma-\rho}_{C([0,T],\R^d)}+\norm{\tilde{\gamma}-\tilde{\rho}}_{C([0,T],\R^d)}\bigr)
\end{multline*}
from which we conclude by means of Gronwall's inequality.

\end{proof}

\begin{remark}
 For $t_0=0$ by abuse of notation we write 
 \begin{equation*}
  \Phi_{0,t}[w_{k\ell}^{\text{in}},\phi^k|_{[t_0,t]},\phi^{\ell}|_{[t_0,t]}]=\Phi_{t}[w_{k\ell}^{\text{in}},\phi^k,\phi^{\ell}].
 \end{equation*}
\end{remark}
Using \eqref{eq:network:flow} we can replace the weights $w_{k\ell}$ in \eqref{eq:disc:model:1} which leads to 
\begin{equation}\label{eq:disc:model:2}
 \dot{\phi}^k_t=\frac{1}{N}\sum_{\ell=1}^{N}\Phi_{t}[w_{k\ell}^{\text{in}},\phi^k,\phi^{\ell}]C(\phi^k_t,\phi^\ell_t).
\end{equation}
The advantage of this equation over \eqref{eq:disc:model:1} is that it is a closed system for $\phi^k$ while the underlying network structure only enters explicitly through the initial datum $w_{k\ell}^{\text{in}}$ which can be treated essentially using the same techniques as for static networks. However, the network structure and its evolution is of course encoded in $\Phi_t$. In particular, the price we have to pay for this decoupling is that we are now concerned with a non-local ODE system since $\Phi_t$ in general depends on the whole trajectory of its input parameters.

These considerations motivate to consider the following non-local in time system of equations
\begin{equation}\label{eq:disc:model:3}
 \dot{\phi}^k_t=\frac{1}{N}\sum_{\ell=1}^{N}K_{t}(w_{k\ell}^{\text{in}},\phi^k|_{[0,t]},\phi^{\ell}|_{[0,t]});\qquad \phi^{k}_0=\phi^{k,\text{in}}
\end{equation}
where now $K_t\colon \R\times C([0,t],\R^d)\times C([0,t],\R^d) \to \R$ is a functional on the space of continuous functions.

An examples of a system which fits into this framework is given as follows:

\begin{example}
 This example is a specific case of \eqref{eq:disc:model:2} and motivated by adaptively coupled phase oscillators which arise e.g.\@ in neuroscience \cite{HNP16}. Precisely, one considers systems of the form
 \begin{equation}\label{eq:example:2}
  \begin{split}
  \dot{\phi}^k_t&=\frac{1}{N}\sum_{\ell=1}^{N}w_{k\ell}C(\phi^k_t,\phi^\ell_t)\\
  \dot{w}_{k\ell}&=-aw_{k\ell}+g(\phi^k,\phi^\ell).
  \end{split}
 \end{equation}
 The second equation can be integrated explicitly by means of Duhamel's formula which yields
 \begin{equation*}
  w_{k\ell}(t)=w_{k\ell}\ee^{-at}+ \int_{0}^{t}g(\phi^k_s,\phi_s^\ell)\ee^{-a(t-s)}\dd{s}.
 \end{equation*}
 Plugging this expression into the first equation in \eqref{eq:example:2} provides a closed (non-local in time) system for $\phi^k$, i.e.\@ 
 \begin{equation*}
  \dot{\phi}^k_t=\frac{1}{N}\sum_{\ell=1}^{N}\biggl(w_{k\ell}\ee^{-at}+ \int_{0}^{t}g(\phi^k_s,\phi_s^\ell)\ee^{-a(t-s)}\dd{s}\biggr)C(\phi^k_t,\phi^\ell_t).
 \end{equation*}
\end{example}

\subsection{Notation and assumptions}

We collect in the following some notation and the main assumptions which we will use throughout this work. 

For $T>0$ fixed let $C([0,T],\R^d)$ be the space of $\R^d$-valued continuous functions on the interval $[0,T]$ equipped with the supremum norm $\norm{f}_{C([0,T],\R^d)}=\sup_{t\in[0,T]}\abs{f(t)}$ for which $C([0,T],\R^d)$ is a Banach space. Moreover, to simplify notation at some places we will also write $C([0,T])$.

We denote by $e_t\colon C([0,T])\to \R^d$ the evaluation map at $t\in [0,T]$, i.e.\@ for $f\in C([0,T],\R^d)$ we have $e_t f=f(t)$. Moreover, $r_t\colon C([0,T],\R^d)\to C([0,t],\R^d)$ denotes the restriction, i.e.\@ $r_t f=f|_{[0,t]}$.

To describe the underlying graph structure, we will mainly follow the approach in \cite{ChM16,KaM18} which builds on the theory of graph limits \cite{LoS06,Lov12,BCL08}. More precisely, we parametrise graphs over a fixed reference domain $I=[0,1)$ by considering for $N\in\N$ the partition $I=\cup_{k=1}^{N}I_k^N$ with $I_k^N=[(k-1)/N,k/N)$. To simplify the notation at some places, we might also drop the superscript $N$. We also denote $[N]=\{1,\ldots,N\}$.

A (signed) \emph{graphon} will be a function $\W\colon I\times I\to \R$. Given a graphon $\W$ one can construct associated sequences of finite graphs converging to this limiting graphon: e.g.\@ if we denote $x_k=(k-1/2)/N$ the center of $I_k$ one defines $w^N_{k\ell}=\W(x_k,x_\ell)$. Alternatively, we might take the average
\begin{equation}\label{eq:average:weight}
 w_{k\ell}^N=N^2\int_{I_k^N\times I_\ell^N}\W(x,y)\dd{x}\dd{y}.
\end{equation}
 Conversely, given a finite graph encoded via the adjacency matrix $w_{k\ell}$ we can build a corresponding (piecewise constant) graphon 
\begin{equation}\label{eq:step:graphon}
  \W(x,y)=\sum_{k,\ell=1}^{N}w_{k\ell}\chi_{I_k}(x)\chi_{I_\ell}(y)
\end{equation}
where $\chi_A$ denotes the characteristic function of the set $A$.

If $(X,d)$ is a complete, separable, normed space we denote by $\mathcal{P}(X)$ the space of corresponding Borel probability measures on $X$, while $\mathcal{P}_q(X)$ denotes the subset of measures with bounded $q$th moment, i.e.\@
\begin{equation*}
 \int_{\R^d}\bigl(d(x_0,x)\bigr)^q\dd{\mu(x)}<\infty \qquad \text{for any } x_0\in X.
\end{equation*}
Moreover, on $\mathcal{P}(X)$ we consider the Wasserstein-1 distance denoted by $\dist$ which is defined by
\begin{equation*}
 \dist(\mu,\nu)=\inf_{\pi\in\Pi(\mu,\nu)}\int_{X\times X}d(x,y)\dd{\pi}(x,y)
\end{equation*}
where $\Pi(\mu,\nu)$ is the set of couplings between $\mu$ and $\nu$, i.e.\@ probability measures on $X\times X$ with first and second marginal $\mu$ and $\nu$ respectively. Equivalently, $\dist(\mu,\nu)$ is also given by
\begin{equation*}
 \dist(\mu,\nu)=\sup_{\norm{\varphi}_{\text{Lip}}\leq 1}\abs[\bigg]{\int_{X}\varphi\dd{\mu}-\int_{X}\varphi\dd{\nu}}
\end{equation*}
where the supremum is taken over all Lipschitz functions $\varphi\colon X\to\R $ with Lipschitz constant less than $1$ (see e.g.\@ \cite{Vil09}).

Following \cite{KaM18} we consider further the set of measurable $\mathcal{P}(X)$-valued functions $\mathcal{P}^{I}(X)=\{\mu\colon I\ni x\mapsto \mu^{x}\in \mathcal{P}(X) \text{ measurable}\}$ which we equip with the associated distance
\begin{equation}\label{eq:WI}
\begin{split}
 \dist^{I}(\mu,\nu)=\int_{I}\dist(\mu^x,\nu^x)\dd{x}&=\int_{I}\inf_{\pi\in\Pi(\mu^x,\nu^x)}\int_{X\times X}\abs{\zeta-\tilde{\zeta}}\dd{\pi}(\zeta,\tilde{\zeta})\dd{x}\\
 &=\int_{I}\sup_{\norm{\varphi}_{\text{Lip}}\leq 1}\abs[\bigg]{\int_{X}\varphi\dd{\mu^x}-\int_{X}\varphi\dd{\nu^x}}\dd{x}.
 \end{split}
\end{equation}
Given two measure spaces $(X,\sigma_X)$, $(Y,\sigma_Y)$ and a measurable map $f\colon X\to Y$ we denote as usual the \emph{push-forward} of a measure $\mu$ on $X$ under $f$ by $f\#\mu$ which is defined through the formula
\begin{equation*}
 f\#\mu(B)=\mu(f^{-1}(B))\quad \text{for all } B\in \sigma_Y
\end{equation*}
where $f^{-1}(B)$ is the pre-image of $B$ under $f$. We also note that 
\begin{equation*}
 \int_{Y}\varphi(y)\dd{f\#\mu}(y)=\int_{X}\varphi\circ f(x)\dd{\mu}(x).
\end{equation*}
Let $K_t\colon \R\times C([0,t],\R^d)\times C([0,t],\R^d) \to \R$ satisfy the following regularity assumptions:
\begin{equation}\label{Ass:K1}
 K_t \text{ is bounded, i.e.\@ } \sup_{t\in[0,T]}\abs{K_t(a,r_t f,r_t g)}\leq C \text{ for all } a\in\R \text{ and all } f,g\in C([0,T],\R^d).
\end{equation}
\begin{equation}\label{Ass:K2}
 t\mapsto K_t \text{ is continuous on } [0,T]
\end{equation}
\begin{equation}\label{Ass:K3}
\begin{gathered}
  K_t \text{ is Lipschitz continuous with constant } L_K \text{ uniformly with respect to }t\in[0,T],\text{ i.e.\@ } \\*
  \abs{K_t(a,r_t f^1,r_t g^1)-K_t(b,r_t f^2,r_t g^2)}\leq L_{K}(\abs{a-b}+\norm{f^1-g^1}_{C([0,t])}+\norm{f^2-g^2}_{C([0,t])}).
\end{gathered} 
\end{equation}
Assumption \eqref{Ass:K3} implies in particular
\begin{equation*}
 \sup_{t\in[0,T]}\abs{K_t(a,r_t f^1,r_t g^1)-K_t(b,r_t f^2,r_t g^2)}\leq L_{K}(\abs{a-b}+\norm{f^1-g^1}_{C([0,T])}+\norm{f^2-g^2}_{C([0,T])})
\end{equation*}
\begin{remark}
 In view of \eqref{Ass:K2}, $K_\cdot$ can be regarded as a map to $C([0,T],\R^d)$.
\end{remark}

\subsection{Main results}
The aim of this work consists in establishing a mean-field description for systems of the form \eqref{eq:disc:model:3} which will be given by a measure on the path space $C([0,T],\R^d)$ which reflects the non-local in time structure. We also note that we do not strive for the most general result concerning our assumptions but to present the results in a relatively easy setting to avoid technical difficulties. 

We first note the following statement which provides existence of a unique solution to discrete systems of the form \eqref{eq:disc:model:3}.

\begin{theorem}\label{Thm:disc:well:posed}
 Let $K$ satisfy \eqref{Ass:K1}--\eqref{Ass:K3}. The system \eqref{eq:disc:model:3} has a unique solution for every given $(\phi^{k,\text{in}})_k\in (\R^d)^N$
\end{theorem}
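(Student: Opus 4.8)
The plan is to recast \eqref{eq:disc:model:3} as a fixed-point problem on the Banach space $C([0,T],(\R^d)^N)$ and to invoke Banach's fixed point theorem, exactly as in the proof of Lemma~\ref{Lem:weight:dyn} but now carrying along the memory term. Integrating \eqref{eq:disc:model:3} in time, a continuous $\phi=(\phi^1,\dots,\phi^N)$ solves the system (classically) if and only if it is a fixed point of the map $\mathcal{T}$ given componentwise by
\begin{equation*}
 \mathcal{T}[\phi]^k_t=\phi^{k,\text{in}}+\frac1N\sum_{\ell=1}^{N}\int_0^t K_s\bigl(w_{k\ell}^{\text{in}},r_s\phi^k,r_s\phi^\ell\bigr)\dd{s}.
\end{equation*}
First I would verify that $\mathcal{T}$ is well defined, i.e.\@ that for continuous $\phi$ the map $s\mapsto K_s(w_{k\ell}^{\text{in}},r_s\phi^k,r_s\phi^\ell)$ is continuous on $[0,T]$, so that $\mathcal{T}[\phi]$ is in fact $C^1$ in $t$; this uses \eqref{Ass:K2} together with the uniform Lipschitz bound \eqref{Ass:K3} and the continuity of the restriction maps $r_s$. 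Boundedness \eqref{Ass:K1} then ensures $\mathcal{T}[\phi]\in C([0,T],(\R^d)^N)$, so $\mathcal{T}$ maps the space into itself.

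Next I would equip $C([0,T],(\R^d)^N)$ with the weighted norm $\norm{\phi}_\alpha=\sup_{t\in[0,T]}\ee^{-\alpha t}\max_{k}\abs{\phi^k_t}$ for a constant $\alpha>0$ to be fixed, which is equivalent to the usual sup-norm on $[0,T]$. Using \eqref{Ass:K3} with the weight argument held fixed,
\begin{equation*}
 \abs{\mathcal{T}[\phi]^k_t-\mathcal{T}[\psi]^k_t}\leq L_K\int_0^t\Bigl(\norm{r_s(\phi^k-\psi^k)}_{C([0,s])}+\frac1N\sum_{\ell=1}^{N}\norm{r_s(\phi^\ell-\psi^\ell)}_{C([0,s])}\Bigr)\dd{s}.
\end{equation*}
The observation that handles the non-locality is that $\norm{r_s(\phi^k-\psi^k)}_{C([0,s])}=\sup_{\tau\leq s}\abs{\phi^k_\tau-\psi^k_\tau}\leq\ee^{\alpha s}\norm{\phi-\psi}_\alpha$ (and likewise for the averaged $\ell$-sum), so the integrand is dominated by $2L_K\ee^{\alpha s}\norm{\phi-\psi}_\alpha$; integrating and multiplying by $\ee^{-\alpha t}$ gives $\norm{\mathcal{T}[\phi]-\mathcal{T}[\psi]}_\alpha\leq\frac{2L_K}{\alpha}\norm{\phi-\psi}_\alpha$. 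Choosing $\alpha>2L_K$ makes $\mathcal{T}$ a contraction, so it has a unique fixed point in $C([0,T],(\R^d)^N)$, which is the unique solution of \eqref{eq:disc:model:3} on $[0,T]$ (uniqueness in the weighted norm being the same as uniqueness in the sup-norm).

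I expect the only genuine subtlety to be the well-definedness step — making precise the continuity in $s$ of $s\mapsto K_s(w_{k\ell}^{\text{in}},r_s\phi^k,r_s\phi^\ell)$, since the functional $K_s$ is defined on the domain $C([0,s],\R^d)$ which varies with $s$. This is precisely what assumption \eqref{Ass:K2} is designed to supply (read through the remark that $K_\cdot$ may be regarded as a map into $C([0,T],\R^d)$), in combination with \eqref{Ass:K3} to control the variation of the argument $r_s\phi$ as $s$ changes. Everything else — the mapping property, the contraction estimate, and the passage between the integral equation and the differential system — is routine and mirrors the proof of Lemma~\ref{Lem:weight:dyn}.
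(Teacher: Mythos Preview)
Your proposal is correct and follows precisely the approach the paper itself indicates: the paper does not give a proof but states that the result ``can be proved by a straightforward application of Banach's fixed point theorem similarly to the proof of Lemma~\ref{Lem:weight:dyn}'', which is exactly what you carry out with the weighted norm $\norm{\cdot}_\alpha$ and the choice $\alpha>2L_K$.
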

This statement can be proved by a straightforward application of Banach's fixed point theorem similarly to the proof of Lemma~\ref{Lem:weight:dyn} and we omit the details.

The first main result of this work concerns the derivation of the mean-field limit for \eqref{eq:disc:model:3} with a sequence of initial networks generated from a Lipschitz graphon. More precisely, if $\phi^{N,k}$ denotes the solution to \eqref{eq:disc:model:3} we will show that the corresponding empirical measure $\mu^{N}$ has a limit $\mu$ as $N\to\infty$. The precise formulation of the result can be found in Theorem~\ref{Thm:mean:field:1}. 

Our second main result concerns the relaxation of the Lipschitz condition on the graphon. In fact, following the approach in \cite{KaM18}, we will derive a mean-field description of \eqref{eq:disc:model:3} under the weaker continuity assumption \eqref{eq:W:cont}. The precise result can be found in Theorem~\ref{Thm:mean:field:2}.¸

The remainder of the article is organised as follows. In Section~\ref{Sec:Lip:graph} we demonstrate the derivation of the mean-field limit for \eqref{eq:disc:model:3} for a sequence of Lipschitz graphs. More precisely, we combine the classical methods in \cite{Gol16} with the approach in \cite{ChM16} and adapt them in a suitable way to the non-local in time system \eqref{eq:disc:model:3}. The first step consists in constructing a suitable system of characteristic equations associated to the discrete model. The key step is Dobrushin's estimate which is given in Proposition~\ref{Prop:Dobrushin} and which gives a bound on the difference of the mean field limit and the empirical measure at time $t$ in terms of the corresponding initial distance over a finite time horizon. We also briefly discuss the relation between the mean-field limit and the corresponding continuum limit of \eqref{eq:disc:model:3} in analogy to \cite{AyP21}.

In Section~\ref{Sec:non:Lip:graph} we outline how the results from Section~\ref{Sec:Lip:graph} can be generalised to non-Lipschitz graph sequences by adapting the approach in \cite{KaM18}.

\section{The mean field limit for Lipschitz graphs}\label{Sec:Lip:graph}

In this section, we derive the mean-field limit for \eqref{eq:disc:model:3} assuming that the underlying graph sequence is generated from a Lipschitz continuous graphon. More precisely, following \cite{ChM16}, for $N\in\N$, we fix a discretisation $X_N=\{x^{N,1},\ldots,x^{N,N}\}$ of $I=[0,1]$ such that 
\begin{equation}\label{eq:disc}
 \lim_{N\to\infty}\frac{1}{N}\sum_{k=1}^{N}f(x^{N,k})=\int_{I}f(x)\dd{x} \qquad \text{for all } f\in C(I).
\end{equation}
A canonical choice would e.g.\@ be $x^{N,k}=\frac{k-1/2}{N}$ for $k=1,\ldots,N$, i.e.\@ the center points of $I_k$.
Let $\W\colon I\times I\to \R$ be symmetric and Lipschitz continuous, i.e.\@
\begin{equation}\label{eq:ass:Lip:W}
 \abs{\W(x,y)-\W(\tilde{x},\tilde{y}}\leq L_\W\bigl(\abs{x-\tilde{x}}^2+\abs{y-\tilde{y}}^2\bigr)^{1/2}\quad \text{and}\quad \W(x,y)=\W(y,x) \quad \text{for all } x,\tilde{x},y,\tilde{y}\in I.
\end{equation}
We consider the graph sequence $G^N=(V(G^N),E(G^N))$ generated by $\W$ and the discretisation $X_N$, i.e.\@ we identify the vertices $V(G^N)=[N]$ and the edges $E(G^N)=\{(k,\ell)\in I\times I \;\vert\; \W(x^{N,k},x^{N,\ell})\neq 0 \}$. To each edge $(k,\ell)\in E(G^N)$ we associate the weight 
\begin{equation}\label{eq:Lip:weight}
w_{k\ell}=\W(x^{N,k},x^{N,\ell}). 
\end{equation}
With this, we can consider the augmented system 
\begin{equation}\label{eq:disc:model:4}
 \begin{split}
   \dot{\phi}^k_t&=\frac{1}{N}\sum_{\ell=1}^{N}K_{t}(w_{k\ell}^{\text{in}},\phi^k|_{[0,t]},\phi^{\ell}|_{[0,t]});\qquad \phi^{k}_0=\phi^{k,\text{in}}\\
   \dot{x}^{N,k}&=0.
 \end{split}
\end{equation}
The corresponding empirical measure on $C([0,T],\R^d)\times I$ is then defined as
\begin{equation}\label{eq:emp:measure:2}
 \mu^N=\frac{1}{N}\sum_{k=1}^{N}\delta_{(\phi_\cdot^k,x^{N,k})}.
\end{equation}
We note moreover, that the evaluation map $e_t$ induces a measure on $\R^d\times I$ via 
\begin{equation}\label{eq:emp:measure:1:t}
 \mu^N_t\vcc=(e_t,\id)\#\mu^N=\frac{1}{N}\sum_{k=1}^{N}\delta_{(\phi_t^k,x^{k,N})}.
\end{equation}

\subsection{The characteristic equation}

We adapt mainly the classical approach and notation as outlined in \cite{Gol16} to derive the mean-field limit for interacting systems in our situation. More precisely, we consider first the corresponding characteristic equations
\begin{equation}\label{eq:charact:1}
 \begin{split}
  \del_{t}Z^x_t(\zeta^{\text{in}},\mu_0)&=\int_{C([0,t])\times I}K_t(\W(x,y),r_t Z^x_\cdot(\zeta^{\text{in}},\mu_0),r_t\gamma_\cdot)\dd{\mu(\gamma,y)}\\
  \mu&=(Z^{\cdot}_{\cdot}(\cdot,\mu_0),\id)\#\mu_0\\
  Z^x_0(\zeta^{\text{in}},\mu_0)&=\zeta^{\text{in}}
 \end{split}
\end{equation}
where $\zeta^{\text{in}}\in\R^d$, $\mu_0\in \mathcal{P}(\R^d\times I)$. We also note that we consider $Z^\cdot_{\cdot}(\cdot,\mu_0)\colon \R^d\times I\to C([0,T],\R^d)$ via $\R^d\times I\ni (\zeta,x)\mapsto Z^x_{\cdot}(\zeta,\mu_0)\in C([0,T],\R^d)$.
We can equivalently reformulate \eqref{eq:charact:1} as
\begin{equation}\label{eq:charact:2}
 \begin{split}
  \del_{t}Z^x_t(\zeta^{\text{in}},\mu_0)&=\int_{\R^d\times I}K_t(\W(x,y),r_t Z^x_\cdot(\zeta^{\text{in}},\mu_0),r_t Z^y_{\cdot}(\zeta,\mu_0))\dd{\mu_0(\zeta,y)}\\*
  Z^x_0(\zeta^{\text{in}},\mu_0)&=\zeta^{\text{in}}.
 \end{split}
\end{equation}
The following proposition provides the existence of a unique solution to the characteristic equation.
\begin{proposition}\label{Prop:ex:charact:flow}
 Assume that $K$ satisfies \eqref{Ass:K1}--\eqref{Ass:K3} and $\W$ satisfies \eqref{eq:ass:Lip:W}. For each $\mu_0\in\mathcal{P}_1(\R^d\times I)$ the equation \eqref{eq:charact:2} has a unique solution $Z\in C([0,T]\times I\times \R^d,\R^d)$ with $[0,T]\times I\times \R^d \ni (t,x,\zeta^{\text{in}})\mapsto Z_{t}^{x}(\zeta^{\text{in}},\mu_0)\in\R^d$. Moreover, $Z$ is differentiable with respect to the time variable $t$.
\end{proposition}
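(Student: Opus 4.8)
The plan is to solve the closed formulation \eqref{eq:charact:2} (equivalent to \eqref{eq:charact:1} via $\mu=(Z^\cdot_\cdot(\cdot,\mu_0),\id)\#\mu_0$) by Banach's fixed point theorem, in the spirit of Lemma~\ref{Lem:weight:dyn}, but on a space that carries the three independent variables at once: time $t\in[0,T]$, the graph label $x\in I$, and the initial value $\zeta^{\text{in}}\in\R^d$. Since by \eqref{Ass:K1} the right-hand side of \eqref{eq:charact:2} is bounded by $C$, any candidate solution satisfies $\abs{Z^x_t(\zeta^{\text{in}})-\zeta^{\text{in}}}\le CT$, so I would pass to the shifted unknown $W^x_t(\zeta^{\text{in}})\vcc= Z^x_t(\zeta^{\text{in}})-\zeta^{\text{in}}$, which is bounded and hence lives in the Banach space $X\vcc= C_b([0,T]\times I\times\R^d,\R^d)$ of bounded continuous functions; I equip $X$ with the weighted norm $\norm{W}_\alpha\vcc=\sup_{(t,x,\zeta)}\ee^{-\alpha t}\abs{W^x_t(\zeta)}$ for a constant $\alpha>0$ to be fixed, which is equivalent to the sup norm since $\ee^{-\alpha T}\norm{W}_\infty\le\norm{W}_\alpha\le\norm{W}_\infty$, so $(X,\norm{\cdot}_\alpha)$ is complete. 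On $X$ I consider
\begin{equation*}
 \mathcal{T}[W]^x_t(\zeta^{\text{in}})\vcc=\int_0^t\int_{\R^d\times I}K_s\bigl(\W(x,y),r_s(W^x_\cdot(\zeta^{\text{in}})+\zeta^{\text{in}}),r_s(W^y_\cdot(\zeta)+\zeta)\bigr)\dd{\mu_0(\zeta,y)}\dd{s},
\end{equation*}
where $W^x_\cdot(\zeta^{\text{in}})+\zeta^{\text{in}}$ denotes the path $\tau\mapsto W^x_\tau(\zeta^{\text{in}})+\zeta^{\text{in}}$, so that fixed points of $\mathcal{T}$ correspond exactly to solutions $Z=W+\zeta^{\text{in}}$ of \eqref{eq:charact:2} (note $\mathcal{T}[W]^x_0\equiv0$, which reproduces the initial condition).

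Next I would verify that $\mathcal{T}$ maps $X$ into itself. The bound $\abs{\mathcal{T}[W]^x_t(\zeta^{\text{in}})}\le CT$ is immediate from \eqref{Ass:K1}, and the inner integral makes sense because, $W$ being jointly continuous, $(\zeta,y)\mapsto r_s(W^y_\cdot(\zeta)+\zeta)\in C([0,s],\R^d)$ is continuous — hence Borel measurable — while $K_s$ is continuous by \eqref{Ass:K3}, so the integrand is $\mu_0$-measurable and bounded by $C$. For joint continuity of $(t,x,\zeta^{\text{in}})\mapsto\mathcal{T}[W]^x_t(\zeta^{\text{in}})$: continuity (indeed Lipschitz continuity) in $t$ comes from boundedness of the integrand; continuity in $x$ uses that $\W$ is Lipschitz in its arguments by \eqref{eq:ass:Lip:W}, that $K_s$ is Lipschitz in all of its arguments by \eqref{Ass:K3}, and that $x\mapsto r_s(W^x_\cdot(\zeta^{\text{in}})+\zeta^{\text{in}})$ is continuous into $C([0,s],\R^d)$ by uniform continuity of $W$ on compact subsets of $[0,T]\times I\times\R^d$; continuity in $\zeta^{\text{in}}$ is analogous; in each case the limit is exchanged with the $\mu_0$-integral by dominated convergence with the constant $C$ as majorant.

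For the contraction, let $W^{(1)},W^{(2)}\in X$ and set $D(s)\vcc=\sup_{\tau\le s}\sup_{(x,\zeta)\in I\times\R^d}\abs{W^{(1),x}_\tau(\zeta)-W^{(2),x}_\tau(\zeta)}$. In the difference of the two integrands the first entry $\W(x,y)$ of $K_s$ and the additive shifts $\zeta^{\text{in}},\zeta$ coincide, so \eqref{Ass:K3} bounds it by $L_K$ times the sum of the $C([0,s],\R^d)$-distances of the two pairs of restricted trajectories, each at most $D(s)$; integrating against the probability measure $\mu_0$ and over $s\in[0,t]$ gives $\abs{\mathcal{T}[W^{(1)}]^x_t(\zeta^{\text{in}})-\mathcal{T}[W^{(2)}]^x_t(\zeta^{\text{in}})}\le 2L_K\int_0^t D(s)\dd{s}$. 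Since $D(s)\le\ee^{\alpha s}\norm{W^{(1)}-W^{(2)}}_\alpha$, multiplying by $\ee^{-\alpha t}$ yields $\norm{\mathcal{T}[W^{(1)}]-\mathcal{T}[W^{(2)}]}_\alpha\le\frac{2L_K}{\alpha}\norm{W^{(1)}-W^{(2)}}_\alpha$, so any $\alpha>2L_K$ makes $\mathcal{T}$ a contraction, and Banach's fixed point theorem provides a unique $W\in X$; then $Z\vcc= W+\zeta^{\text{in}}\in C([0,T]\times I\times\R^d,\R^d)$ is the unique solution of \eqref{eq:charact:2}. Finally, once $Z$ is known to be continuous in all variables, $s\mapsto K_s(\W(x,y),r_s Z^x_\cdot(\zeta^{\text{in}}),r_s Z^y_\cdot(\zeta))$ is continuous on $[0,T]$ — combining \eqref{Ass:K2} with \eqref{Ass:K3} and the continuity of $s\mapsto r_s Z^x_\cdot(\zeta^{\text{in}})$ — and bounded, so $t\mapsto Z^x_t(\zeta^{\text{in}})=\zeta^{\text{in}}+\int_0^t(\cdots)\dd{s}$ is continuously differentiable in $t$ with derivative the right-hand side of \eqref{eq:charact:2}.

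I expect the contraction step itself to be routine; the care lies in the choice of function space. The solution is unbounded in $\zeta^{\text{in}}$, which forces the passage to $W$; the non-locality in time means that the growth of $\mathcal{T}[W]$ at time $t$ must be controlled by the entire past through the restriction operators $r_s$; and the joint continuity in $(t,x,\zeta^{\text{in}})$ has to be reconciled with the $t$-dependent codomains $C([0,t],\R^d)$ of the restriction maps that appear inside $K_t$. These bookkeeping points — rather than any genuine difficulty — are where the work goes.
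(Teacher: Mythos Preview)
Your proposal is correct and follows essentially the same route the paper indicates: a Banach fixed-point argument in the spirit of \cite[Theorem~1.3.2]{Gol16}, with the only cosmetic difference that you absorb the linear growth in $\zeta^{\text{in}}$ by the shift $W=Z-\zeta^{\text{in}}$ and gain the contraction via an exponential time weight (as in Lemma~\ref{Lem:weight:dyn}), whereas the paper suggests the weighted norm $\norm{Z}=\sup_{\zeta,t,x}\abs{Z^x_t(\zeta)}/(1+\abs{\zeta})$. Both devices serve the same purpose and the remaining steps coincide.
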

The proof follows by classical arguments in the line of the proof of Banach's fixed-point theorem. In particular, the statement can be shown by adapting the proof of \cite[Theorem 1.3.2]{Gol16} in a straightforward manner using the norm $\norm{Z}=\sup_{\zeta\in \R^d}\sup_{(t,x)\in [0,T]\times I}\frac{\abs{Z_{t}^{x}(\zeta)}}{1+\abs{\zeta}}$.

The characteristics satisfy the following Lipschitz condition.
\begin{lemma}\label{Lem:Lip:flow}
 For $K$ satisfying \eqref{Ass:K1}--\eqref{Ass:K3} and $\W$ satisfying \eqref{eq:ass:Lip:W}, the characteristics $Z^x_t(\zeta^{\text{in}},\mu_0)$ are Lipschitz continuous with respect to $x$ and the initial datum $\zeta^{\text{in}}$. More precisely, 
\begin{equation*}
 \begin{split}
    \norm{Z^x_\cdot(\zeta^{\text{in}},\mu_0)-Z^{\tilde{x}}_\cdot(\tilde{\zeta}^{\text{in}},\mu_0)}_{C([0,t])}&\leq \ee^{L_K t}\abs{\zeta^{\text{in}}-\tilde{\zeta^{\text{in}}}}+L_\W(\ee^{L_K t}-1)\abs{x-\tilde{x}}\\
    &\leq \sqrt{2}\max\{1,L_\W\}\ee^{L_K t}\bigl(\abs{\zeta^{\text{in}}-\tilde{\zeta}^{\text{in}}}^2+\abs{x-\tilde{x}}^2\bigr)^{1/2}
 \end{split}
 \end{equation*}
  uniformly in $\mu_0$. In particular, we have
 \begin{equation*}
  \begin{split}
     \norm{Z^x_\cdot(\zeta^{\text{in}},\mu_0)-Z^{\tilde{x}}_\cdot(\tilde{\zeta}^{\text{in}},\mu_0)}_{C([0,T])}&\leq \ee^{L_K T}\abs{\zeta^{\text{in}}-\tilde{\zeta}^{\text{in}}}+L_\W(\ee^{L_K T}-1)\abs{x-\tilde{x}}\\
  &\leq \sqrt{2}\max\{1,L_\W\}\ee^{L_K T}\bigl(\abs{\zeta^{\text{in}}-\tilde{\zeta}^{\text{in}}}^2+\abs{x-\tilde{x}}^2\bigr)^{1/2}.
  \end{split}
 \end{equation*}
\end{lemma}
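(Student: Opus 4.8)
The plan is to derive both estimates via Gronwall's inequality applied to the integral form of the characteristic equation \eqref{eq:charact:2}, exploiting the Lipschitz assumption \eqref{Ass:K3} on $K$ and the Lipschitz assumption \eqref{eq:ass:Lip:W} on $\W$. First I would fix $\mu_0\in\mathcal{P}_1(\R^d\times I)$, two points $x,\tilde{x}\in I$, and two initial data $\zeta^{\text{in}},\tilde{\zeta}^{\text{in}}\in\R^d$, and write both characteristics in the form $Z^x_t(\zeta^{\text{in}},\mu_0)=\zeta^{\text{in}}+\int_0^t\int_{\R^d\times I}K_s(\W(x,y),r_s Z^x_\cdot,r_s Z^y_\cdot)\dd{\mu_0(\zeta,y)}\dd{s}$ and similarly for $\tilde{x},\tilde{\zeta}^{\text{in}}$. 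Subtracting, taking absolute values and using \eqref{Ass:K3} inside the integral gives, for the integrand at time $s$, a bound $L_K\bigl(\abs{\W(x,y)-\W(\tilde{x},y)}+\norm{Z^x_\cdot-Z^{\tilde{x}}_\cdot}_{C([0,s])}\bigr)$, while the term coming from the $y$-variable cancels because both integrals are against the same $\mu_0$ and the inner arguments $r_s Z^y_\cdot$ agree. By \eqref{eq:ass:Lip:W} we have $\abs{\W(x,y)-\W(\tilde{x},y)}\leq L_\W\abs{x-\tilde{x}}$ uniformly in $y$, so the $\mu_0$-integral (a probability measure) of this term is just $L_\W\abs{x-\tilde{x}}$.

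Next I would set $u(t)\vcc=\norm{Z^x_\cdot(\zeta^{\text{in}},\mu_0)-Z^{\tilde{x}}_\cdot(\tilde{\zeta}^{\text{in}},\mu_0)}_{C([0,t])}$, which is nondecreasing in $t$, and observe that for any $\tau\le t$ the difference $\abs{Z^x_\tau-Z^{\tilde{x}}_\tau}$ is bounded by $\abs{\zeta^{\text{in}}-\tilde{\zeta}^{\text{in}}}+\int_0^\tau L_K\bigl(L_\W\abs{x-\tilde{x}}+u(s)\bigr)\dd{s}\le \abs{\zeta^{\text{in}}-\tilde{\zeta}^{\text{in}}}+L_KL_\W t\abs{x-\tilde{x}}+L_K\int_0^t u(s)\dd{s}$. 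Taking the supremum over $\tau\in[0,t]$ on the left yields the same bound for $u(t)$, i.e.\@ $u(t)\le a+L_K\int_0^t u(s)\dd{s}$ with $a=\abs{\zeta^{\text{in}}-\tilde{\zeta}^{\text{in}}}+L_KL_\W t\abs{x-\tilde{x}}$. Here one must be slightly careful since $a$ itself depends on $t$; the cleanest fix is to prove the refined bound by instead Gronwall-ing the quantity without absorbing the $x$-term into a constant. Concretely, from $u(t)\le \abs{\zeta^{\text{in}}-\tilde{\zeta}^{\text{in}}}+L_\W\abs{x-\tilde{x}}\cdot L_K t+L_K\int_0^t u(s)\dd{s}$ one applies the integral form of Gronwall and computes $\int_0^t L_K s\,\ee^{L_K(t-s)}\dd{s}=\ee^{L_Kt}-1-L_Kt$, which after combining terms produces exactly $u(t)\le \ee^{L_Kt}\abs{\zeta^{\text{in}}-\tilde{\zeta}^{\text{in}}}+L_\W(\ee^{L_Kt}-1)\abs{x-\tilde{x}}$; alternatively one recognises that $v(t)=u(t)+L_\W\abs{x-\tilde{x}}$ satisfies $v'\le L_Kv$ in the integral sense with $v(0)=\abs{\zeta^{\text{in}}-\tilde{\zeta}^{\text{in}}}+L_\W\abs{x-\tilde{x}}$, giving the bound in one line.

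Finally, the second line of the first displayed inequality follows from the elementary estimate $\ee^{L_Kt}\alpha+L_\W(\ee^{L_Kt}-1)\beta\le \ee^{L_Kt}\bigl(\alpha+L_\W\beta\bigr)\le \ee^{L_Kt}\max\{1,L_\W\}(\alpha+\beta)\le\sqrt2\,\ee^{L_Kt}\max\{1,L_\W\}(\alpha^2+\beta^2)^{1/2}$, using $\ee^{L_Kt}-1\le\ee^{L_Kt}$ and $\alpha+\beta\le\sqrt2(\alpha^2+\beta^2)^{1/2}$. The statement on $[0,T]$ is immediate by taking $t=T$ and monotonicity of all the prefactors in $t$. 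I do not anticipate a genuine obstacle here; the only point requiring care is the bookkeeping in the Gronwall step because of the $t$-dependence of the inhomogeneous term, which is why shifting by the constant $L_\W\abs{x-\tilde{x}}$ to get a clean linear differential inequality is the tidiest route.
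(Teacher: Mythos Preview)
Your proposal is correct and follows essentially the same route as the paper: subtract the integral forms of \eqref{eq:charact:2}, apply the Lipschitz bounds \eqref{Ass:K3} and \eqref{eq:ass:Lip:W}, integrate against the probability measure $\mu_0$, take the supremum in time, and conclude by Gronwall. If anything, your write-up is more careful than the paper's, which simply invokes Gronwall without tracking the $t$-dependent inhomogeneity or deriving the $\sqrt{2}\max\{1,L_\W\}$ bound explicitly.
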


\begin{proof}
Since $\mu_0$ is fixed we drop the dependence of $Z$ on this parameters during the proof to simplify the notation. From \eqref{eq:charact:2} together with \eqref{Ass:K3} and \eqref{eq:ass:Lip:W} we deduce
 \begin{multline*}
 Z_t^{x}(\zeta^{\text{in}})-Z_t^{\tilde{x}}(\tilde{\zeta}^{\text{in}})=\zeta^{\text{in}}-\tilde{\zeta}^{\text{in}}\\*
+\int_{0}^{t}\int_{\R\times I}\Bigl(K_s(\W(x,y),r_s Z^x_\cdot(\zeta^{\text{in}}),r_s Z^y_{\cdot}(\zeta))-K_s(\W(\tilde{x},y),r_s Z^{\tilde{x}}_\cdot(\tilde{\zeta}^{\text{in}}),r_s Z^y_{\cdot}(\zeta))\Bigl)\dd{\mu_0(\zeta,y)}\dd{s}\\*
\leq \zeta^{\text{in}}-\tilde{\zeta}^{\text{in}}+L_K\int_{0}^{t}\int_{\R\times I}\abs{\W(x,y)-\W(\tilde{x},y)}+\norm{Z^x_\cdot(\zeta^{\text{in}})-Z^{\tilde{x}}_\cdot(\tilde{\zeta}^{\text{in}})}_{C([0,s])}\dd{\mu_0(\zeta,y)}\dd{s}\\*
\leq \zeta^{\text{in}}-\tilde{\zeta}^{\text{in}}+L_KL_\W t \abs{x-\tilde{x}}+L_K\int_{0}^{t}\norm{Z^x_\cdot(\zeta^{\text{in}})-Z^{\tilde{x}}_\cdot(\tilde{\zeta}^{\text{in}})}_{C([0,s])}\dd{s}.
\end{multline*}
From this estimate the claim easily follows, taking the supremum in $t$ of the absolute value and applying Gronwall's inequality.
\end{proof}

The next proposition states that the characteristics are Lipschitz continuous in the initial measure with respect to the Wasserstein distance which is the key ingredient in the proof of Dobrushin's estimate.
\begin{proposition}\label{Prop:cont:dep:flow}
Assume \eqref{Ass:K1}--\eqref{Ass:K3} and let $\W$ satisfy \eqref{eq:ass:Lip:W}. The characteristics¸ provided by Proposition~\ref{Prop:ex:charact:flow} satisfies the following Lipschitz condition with respect to the initial measure
\begin{multline*}
 \int_{\R\times I}\norm{Z_\cdot^{x}(\zeta^{\text{in}},\mu_0)-Z_\cdot^{x}(\zeta^{\text{in}},\tilde{\mu}_0)}_{C([0,t])}\dd{\tilde{\mu}_0(\zeta^{\text{in}},x)}\\*
 \leq \sqrt{2}\max\{1,L_\W\} \dist(\mu_0,\tilde{\mu}_0) (2\ee^{2L_K t}-\ee^{L_K t}).
\end{multline*}
\end{proposition}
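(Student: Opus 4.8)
The plan is to derive a Gronwall-type estimate for the quantity
$$
G(t) \vcc= \int_{\R\times I}\norm{Z_\cdot^{x}(\zeta^{\text{in}},\mu_0)-Z_\cdot^{x}(\zeta^{\text{in}},\tilde{\mu}_0)}_{C([0,t])}\dd{\tilde{\mu}_0(\zeta^{\text{in}},x)},
$$
by writing the difference of the two characteristics via \eqref{eq:charact:2} as an integral in time, estimating the integrand with the Lipschitz assumption \eqref{Ass:K3}, and then splitting the resulting error into a term that feeds back into $G$ and a term controlled by $\dist(\mu_0,\tilde\mu_0)$. First I would fix $x,\zeta^{\text{in}}$ and subtract the two integral equations: for $s\le t$,
$$
Z_s^{x}(\zeta^{\text{in}},\mu_0)-Z_s^{x}(\zeta^{\text{in}},\tilde\mu_0)
=\int_0^s \Bigl(\textstyle\int_{\R\times I}K_\tau(\W(x,y),\cdot,\cdot)\dd\mu_0 - \int_{\R\times I}K_\tau(\W(x,y),\cdot,\cdot)\dd\tilde\mu_0\Bigr)\dd\tau .
$$
Into the $\mu_0$-integral the arguments are $r_\tau Z^x_\cdot(\zeta^{\text{in}},\mu_0)$ and $r_\tau Z^y_\cdot(\zeta,\mu_0)$; into the $\tilde\mu_0$-integral they are $r_\tau Z^x_\cdot(\zeta^{\text{in}},\tilde\mu_0)$ and $r_\tau Z^y_\cdot(\zeta,\tilde\mu_0)$. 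I would bridge these by an intermediate term in which the first slot is already switched to the $\tilde\mu_0$-characteristic but the measure is still $\mu_0$; this peels off the $\norm{Z^x_\cdot(\zeta^{\text{in}},\mu_0)-Z^x_\cdot(\zeta^{\text{in}},\tilde\mu_0)}_{C([0,\tau])}$ contribution (using \eqref{Ass:K3}), which after integrating $\dd\tilde\mu_0(\zeta^{\text{in}},x)$ and taking $\sup_{s\le t}$ produces exactly $L_K\int_0^t G(\tau)\dd\tau$.

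The remaining piece is a difference of integrals of one fixed function against $\mu_0$ and against $\tilde\mu_0$, namely $\int (\Psi_\tau\dd\mu_0-\Psi_\tau\dd\tilde\mu_0)$ where $\Psi_\tau(\zeta,y)=K_\tau(\W(x,y),r_\tau Z^x_\cdot(\zeta^{\text{in}},\tilde\mu_0),r_\tau Z^y_\cdot(\zeta,\mu_0))$. To bound this by the Wasserstein-$1$ distance I would use the Kantorovich dual formulation: it suffices to show $\Psi_\tau$ is Lipschitz in $(\zeta,y)$ with a controlled constant. By \eqref{Ass:K3} and \eqref{eq:ass:Lip:W} the map $(\zeta,y)\mapsto \Psi_\tau(\zeta,y)$ has Lipschitz constant at most $L_K(L_\W + \mathrm{Lip}_{(\zeta,y)}(Z^y_\cdot(\zeta,\mu_0)))$, and Lemma~\ref{Lem:Lip:flow} gives $\mathrm{Lip}_{(\zeta,y)}(Z^y_\cdot(\zeta,\mu_0))\le \sqrt2\max\{1,L_\W\}\ee^{L_K\tau}$. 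Hence this term is bounded by $L_K\bigl(L_\W+\sqrt2\max\{1,L_\W\}\ee^{L_K\tau}\bigr)\dist(\mu_0,\tilde\mu_0)$, up to the precise normalisation of Lipschitz constants; integrating in $\tau$ over $[0,t]$ and absorbing constants into $\sqrt2\max\{1,L_\W\}$ yields a bound of the form $\sqrt2\max\{1,L_\W\}\dist(\mu_0,\tilde\mu_0)(\ee^{2L_K t}-\ee^{L_K t})$ for the inhomogeneity (note $\int_0^t \ee^{L_K\tau}\dd\tau = (\ee^{L_K t}-1)/L_K$).

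Collecting the two contributions gives $G(t)\le A(t) + L_K\int_0^t G(\tau)\dd\tau$ with $A(t)\le \sqrt2\max\{1,L_\W\}\dist(\mu_0,\tilde\mu_0)(\ee^{2L_K t}-\ee^{L_K t})$, and the integral form of Gronwall's inequality (or a direct computation, since $A$ is itself a combination of exponentials) yields the stated estimate with the factor $2\ee^{2L_K t}-\ee^{L_K t}$. The main obstacle I anticipate is bookkeeping: organising the intermediate-term splitting so that the feedback term is cleanly $L_K\int_0^t G$ and the Wasserstein term carries exactly the constant $\sqrt2\max\{1,L_\W\}$, and then checking that the Gronwall constants combine to give precisely $2\ee^{2L_K t}-\ee^{L_K t}$ rather than something merely comparable; the non-local (path-space) nature of $K_t$ means one must be careful that all sup-norms appearing are over $[0,\tau]$ with $\tau\le t$, so that $G(\tau)$ is genuinely monotone and the Gronwall argument applies verbatim.
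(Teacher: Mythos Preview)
Your overall strategy is correct and close to the paper's, but your intermediate splitting drops a term. After you switch the first slot from $Z^x_\cdot(\zeta^{\text{in}},\mu_0)$ to $Z^x_\cdot(\zeta^{\text{in}},\tilde\mu_0)$, the ``remaining piece'' is
\[
\int_{\R^d\times I}K_\tau\bigl(\W(x,y),r_\tau Z^x_\cdot(\zeta^{\text{in}},\tilde\mu_0),r_\tau Z^y_\cdot(\zeta,\mu_0)\bigr)\dd\mu_0(\zeta,y)
-\int_{\R^d\times I}K_\tau\bigl(\W(x,y),r_\tau Z^x_\cdot(\zeta^{\text{in}},\tilde\mu_0),r_\tau Z^y_\cdot(\zeta,\tilde\mu_0)\bigr)\dd\tilde\mu_0(\zeta,y),
\]
which is \emph{not} $\int\Psi_\tau\dd\mu_0-\int\Psi_\tau\dd\tilde\mu_0$ for a single function $\Psi_\tau$: the second slot of $K_\tau$ also changes from the $\mu_0$--characteristic to the $\tilde\mu_0$--characteristic. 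You need a further intermediate $\int\Psi_\tau\dd\tilde\mu_0$ with your $\Psi_\tau$; the extra term $\int(\Psi_\tau-\tilde\Psi_\tau)\dd\tilde\mu_0$ is bounded via \eqref{Ass:K3} by $L_K\int_{\R^d\times I}\norm{Z^y_\cdot(\zeta,\mu_0)-Z^y_\cdot(\zeta,\tilde\mu_0)}_{C([0,\tau])}\dd\tilde\mu_0(\zeta,y)$, which after integrating in $\dd\tilde\mu_0(\zeta^{\text{in}},x)$ produces a \emph{second} copy of $L_K\int_0^t G(\tau)\dd\tau$. Thus the Gronwall inequality reads $G(t)\le A(t)+2L_K\int_0^t G(\tau)\dd\tau$, not $G(t)\le A(t)+L_K\int_0^t G(\tau)\dd\tau$; it is precisely this factor $2L_K$ that makes $\ee^{2L_K t}$ appear.

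Relatedly, your claim that the inhomogeneity already satisfies $A(t)\lesssim \ee^{2L_K t}-\ee^{L_K t}$ is off: the Lipschitz constant of $\Psi_\tau$ you compute (via \eqref{Ass:K3}, \eqref{eq:ass:Lip:W} and Lemma~\ref{Lem:Lip:flow}) is of order $L_K\sqrt2\max\{1,L_\W\}\ee^{L_K\tau}$, so $\int_0^t(\cdots)\dd\tau$ gives at most $\sqrt2\max\{1,L_\W\}\,\ee^{L_K t}\dist(\mu_0,\tilde\mu_0)$, not a term with $\ee^{2L_K t}$. The paper organises the splitting slightly differently---it changes the measure first (keeping both slots as $\mu_0$--characteristics, so the Wasserstein duality applies directly) and then switches both slots simultaneously against $\tilde\mu_0$---which yields exactly the same three contributions: one Wasserstein term bounded by $\sqrt2\max\{1,L_\W\}\ee^{L_K t}\dist(\mu_0,\tilde\mu_0)$ and two feedback terms summing to $2L_K\int_0^t G$. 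Gronwall with rate $2L_K$ and forcing $\sqrt2\max\{1,L_\W\}\ee^{L_K t}\dist(\mu_0,\tilde\mu_0)$ then gives the factor $2\ee^{2L_K t}-\ee^{L_K t}$.
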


\begin{proof}
 From the definition of $Z$ via \eqref{eq:charact:2} we have
\begin{multline*}
 Z_t^{x}(\zeta^{\text{in}},\mu_0^x)-Z_t^{x}(\zeta^{\text{in}},\tilde{\mu}_0)\\*
\shoveleft{=\int_{0}^{t}\int_{\R^d\times I}K_s(\W(x,y),r_s Z^x_\cdot(\zeta^{\text{in}},\mu_0),r_s Z^y_{\cdot}(\zeta,\mu_0))\dd{\mu_0(\zeta,y)}\dd{s}}\\*
\shoveright{-\int_{0}^{t}\int_{\R^d\times I}K_s(\W(x,y),r_s Z^x_\cdot(\zeta^{\text{in}},\tilde{\mu}_0),r_s Z^y_{\cdot}(\zeta,\tilde{\mu}_0))\dd{\tilde{\mu}_0(\zeta,y)}\dd{s}}\\*
\shoveleft{=\int_{0}^{t}\int_{\R^d\times I}K_s(\W(x,y),r_s Z^x_\cdot(\zeta^{\text{in}},\mu_0),r_s Z^y_{\cdot}(\zeta,\mu_0))\dd{\mu_0(\zeta,y)}}\\*
\shoveright{-\int_{\R^d\times I}K_s(\W(x,y),r_s Z^x_\cdot(\zeta^{\text{in}},\mu_0),r_s Z^y_{\cdot}(\zeta,\mu_0))\dd{\tilde{\mu}_0(\zeta,y)}\dd{s}}\\*
\shoveleft{+\int_{0}^{t}\int_{\R^d\times I}\Bigl(K_s(\W(x,y),r_s Z^x_\cdot(\zeta^{\text{in}},\mu_0),r_s Z^y_{\cdot}(\zeta,\mu_0))}\\*
-K_s(\W(x,y),r_s Z^x_\cdot(\zeta^{\text{in}},\tilde{\mu}_0),r_s Z^y_{\cdot}(\zeta,\tilde{\mu}_0))\Bigl)\dd{\tilde{\mu}_0(\zeta,y)}\dd{s}.
\end{multline*}
Together with \eqref{Ass:K3} and Lemma~\ref{Lem:Lip:flow} this yields
\begin{multline*}
 Z_t^{x}(\zeta^{\text{in}},\mu_0^x)-Z_t^{x}(\zeta^{\text{in}},\tilde{\mu}_0)\\*
 \shoveleft{\leq \sqrt{2}\max\{1,L_\W\}\ee^{L_K t}\dist(\mu_0,\tilde{\mu}_0)}\\*
 +L_K\int_{0}^{t}\int_{\R^d\times I}\Bigl(\norm{Z^x_\cdot(\zeta^{\text{in}},\mu_0)-Z^x_\cdot(\zeta^{\text{in}},\tilde{\mu}_0)}_{C([0,s])}\\*
+\norm{Z^y_{\cdot}(\zeta,\mu_0^y)-Z^y_{\cdot}(\zeta,\tilde{\mu}_0)}_{C([0,s])}\Bigr)\dd{\tilde{\mu}_0(\zeta,y)}\dd{s}.
\end{multline*}
By the monotonicity of the time integral, this yields in particular
\begin{multline*}
 \norm{Z_\cdot^{x}(\zeta^{\text{in}},\mu_0^x)-Z_\cdot^{x}(\zeta^{\text{in}},\tilde{\mu}_0)}_{C([0,t])}\\*
 \leq \sqrt{2}\max\{1,L_\W\}\ee^{L_K t}\dist(\mu_0,\tilde{\mu}_0)+L_K\int_{0}^{t}\norm{Z^x_\cdot(\zeta^{\text{in}},\mu_0)-Z^x_\cdot(\zeta^{\text{in}},\tilde{\mu}_0)}_{C([0,s])}\dd{s}\\*
 +L_K\int_{0}^{t}\int_{\R^d\times I}\norm{Z^y_{\cdot}(\zeta,\mu_0)-Z^y_{\cdot}(\zeta,\tilde{\mu}_0)}_{C([0,s])}\dd{\tilde{\mu}_0(\zeta,y)}\dd{s}.
\end{multline*}
Integrating with respect to $\dd{\tilde{\mu}_0}$ yields
\begin{multline*}
 \int_{\R\times I}\norm{Z_\cdot^{x}(\zeta^{\text{in}},\mu_0)-Z_\cdot^{x}(\zeta^{\text{in}},\tilde{\mu}_0)}_{C([0,t])}\dd{\tilde{\mu}_0^x(\zeta^{\text{in}},x)}\\*
 \shoveleft{\leq \sqrt{2}\max\{1,L_\W\}\ee^{L_K t} \dist(\mu_0,\tilde{\mu}_0)}\\*
 +2L_K\int_{0}^{t}\int_{\R^d\times I}\norm{Z^y_{\cdot}(\zeta,\mu_0)-Z^y_{\cdot}(\zeta,\tilde{\mu}_0)}_{C([0,s])}\dd{\tilde{\mu}_0(\zeta,y)}\dd{s}.
\end{multline*}
The claim then follows from Gronwall's inequality.
\end{proof}

We next derive Dobrushin's estimate, i.e.\@ a statement which provides stability with respect to the initial measure on finite time scales.

\begin{proposition}[Dobrushin's estimate]\label{Prop:Dobrushin}
 For $\mu_0,\tilde{\mu}_0\in \mathcal{P}_{1}(\R^d\times I)$ let $\mu_{t}=(Z_{t}^{\cdot}(\cdot,\mu_0),\id)\#\mu_0$ and $\tilde{\mu}_{t}=(Z_{t}^{\cdot}(\cdot,\tilde{\mu}_0),\id)\#\tilde{\mu}_0$ with $Z$ be given by Proposition~\ref{Prop:ex:charact:flow}. Then 
 \begin{equation*}
   \dist(\mu_t,\tilde{\mu}_{t})\leq \sqrt{2}\max\{1,L_\W\} \dist(\mu_0,\tilde{\mu}_0) 2\ee^{2L_K t}.
 \end{equation*}
\end{proposition}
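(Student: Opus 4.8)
The plan is to bound $\dist(\mu_t,\tilde\mu_t)$ by constructing an explicit coupling of $\mu_t$ and $\tilde\mu_t$ built from an optimal coupling of the initial measures, and then estimate the transport cost of that coupling using the Lipschitz properties of the characteristics established in Lemma~\ref{Lem:Lip:flow} and Proposition~\ref{Prop:cont:dep:flow}. Concretely, let $\pi_0\in\Pi(\mu_0,\tilde\mu_0)$ be an optimal coupling realising $\dist(\mu_0,\tilde\mu_0)$, so that $\int_{(\R^d\times I)^2}(\abs{\zeta-\tilde\zeta}+\abs{x-\tilde x})\dd\pi_0\leq \dist(\mu_0,\tilde\mu_0)$ in the product metric (with the obvious constant comparison to the Euclidean one). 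Push $\pi_0$ forward under the map $(\zeta,x,\tilde\zeta,\tilde x)\mapsto\bigl((Z_t^x(\zeta,\mu_0),x),(Z_t^{\tilde x}(\tilde\zeta,\tilde\mu_0),\tilde x)\bigr)$ to obtain $\pi_t\in\Pi(\mu_t,\tilde\mu_t)$; this is a valid coupling precisely because $\mu_t=(Z_t^\cdot(\cdot,\mu_0),\id)\#\mu_0$ and analogously for $\tilde\mu_t$.

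The transport cost of $\pi_t$ is then
\begin{equation*}
 \dist(\mu_t,\tilde\mu_t)\leq \int \bigl(\abs{Z_t^x(\zeta,\mu_0)-Z_t^{\tilde x}(\tilde\zeta,\tilde\mu_0)}+\abs{x-\tilde x}\bigr)\dd\pi_0,
\end{equation*}
and the key point is to split the first term via the triangle inequality through the intermediate point $Z_t^{\tilde x}(\tilde\zeta,\mu_0)$:
\begin{equation*}
 \abs{Z_t^x(\zeta,\mu_0)-Z_t^{\tilde x}(\tilde\zeta,\tilde\mu_0)}\leq \abs{Z_t^x(\zeta,\mu_0)-Z_t^{\tilde x}(\tilde\zeta,\mu_0)}+\abs{Z_t^{\tilde x}(\tilde\zeta,\mu_0)-Z_t^{\tilde x}(\tilde\zeta,\tilde\mu_0)}.
\end{equation*}
The first summand is controlled pointwise by Lemma~\ref{Lem:Lip:flow}, which gives $\abs{Z_t^x(\zeta,\mu_0)-Z_t^{\tilde x}(\tilde\zeta,\mu_0)}\leq\sqrt2\max\{1,L_\W\}\ee^{L_Kt}(\abs{\zeta-\tilde\zeta}^2+\abs{x-\tilde x}^2)^{1/2}$, which after integration against $\pi_0$ is bounded by $\sqrt2\max\{1,L_\W\}\ee^{L_Kt}\dist(\mu_0,\tilde\mu_0)$ (again absorbing the harmless metric-comparison constant, or more carefully keeping the $\ell^2$ form throughout). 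For the second summand, the relevant quantity after integration is $\int\abs{Z_t^{\tilde x}(\tilde\zeta,\mu_0)-Z_t^{\tilde x}(\tilde\zeta,\tilde\mu_0)}\dd\tilde\mu_0(\tilde\zeta,\tilde x)$ — note that the second marginal of $\pi_0$ is exactly $\tilde\mu_0$ — which is precisely what Proposition~\ref{Prop:cont:dep:flow} estimates, namely by $\sqrt2\max\{1,L_\W\}\dist(\mu_0,\tilde\mu_0)(2\ee^{2L_Kt}-\ee^{L_Kt})$. The leftover $\abs{x-\tilde x}$ term integrates to at most $\dist(\mu_0,\tilde\mu_0)$, which is dominated by the exponential factors.

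Adding the three contributions yields $\dist(\mu_t,\tilde\mu_t)\leq\sqrt2\max\{1,L_\W\}\dist(\mu_0,\tilde\mu_0)\bigl(\ee^{L_Kt}+(2\ee^{2L_Kt}-\ee^{L_Kt})\bigr)=\sqrt2\max\{1,L_\W\}\dist(\mu_0,\tilde\mu_0)\,2\ee^{2L_Kt}$, which is the claimed bound (the stray additive $\abs{x-\tilde x}$ term is absorbed since $2\ee^{2L_Kt}\geq 2$ already dominates any $O(1)$ slack, or one simply tracks constants so that they telescope exactly). The main obstacle I anticipate is bookkeeping rather than conceptual: ensuring the metric on $\R^d\times I$ is handled consistently (Euclidean $\ell^2$ versus sum-of-absolute-values) so that the constant $\sqrt2\max\{1,L_\W\}$ comes out exactly as stated, and making sure the intermediate-point splitting lines up with the precise hypotheses of Proposition~\ref{Prop:cont:dep:flow} (which integrates $\dd\tilde\mu_0$, matching the second marginal of the optimal $\pi_0$). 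Everything else is a direct assembly of the two preceding Lipschitz estimates.
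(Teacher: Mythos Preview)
Your proposal is correct and takes essentially the same approach as the paper: decompose the distance into a piece controlled by Lemma~\ref{Lem:Lip:flow} (Lipschitz dependence of the characteristics on $(\zeta^{\text{in}},x)$) and a piece controlled by Proposition~\ref{Prop:cont:dep:flow} (dependence on the initial measure, integrated against $\tilde\mu_0$), then add. The only cosmetic difference is that you work with the primal (coupling) definition of the Wasserstein distance while the paper uses the dual (Kantorovich--Rubinstein) formulation; your honest remarks about the stray $\abs{x-\tilde x}$ term and metric-comparison constants apply equally to the paper's own bookkeeping.
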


\begin{proof}
 By definition, we have
 \begin{equation*}
  \dist(\mu_t,\tilde{\mu}_{t})=\sup_{\norm{\varphi}_{\text{Lip}}\leq 1}\abs[\bigg]{\int_{\R^d\times I}\varphi \dd{\mu}_t-\int_{\R^d\times I}\varphi \dd{\tilde{\mu}}_t}
 \end{equation*}
By definition of $\mu$ and $\tilde{\mu}$ we have together with Lemma~\ref{Lem:Lip:flow} that
\begin{multline*}
   \dist(\mu_t,\tilde{\mu}_{t})=\sup_{\norm{\varphi}_{\text{Lip}}\leq 1}\abs[\bigg]{\int_{\R^d\times I}\varphi(Z_{t}^{x}(\zeta,\mu_0),x) \dd{\mu_0(\zeta,x)}-\int_{\R^d\times I}\varphi(Z_{t}^{x}(\zeta,\tilde{\mu}_0),x) \dd{\tilde{\mu}_0(\zeta,x)}}\\*
   \leq \sup_{\norm{\varphi}_{\text{Lip}}\leq 1}\abs[\bigg]{\int_{\R^d\times I}\varphi(Z_{t}^{x}(\zeta,\mu_0),x) \dd{\mu_0(\zeta,x)}-\int_{\R^d\times I}\varphi(Z_{t}^{x}(\zeta,\mu_0),x) \dd{\tilde{\mu}_0(\zeta,x)}}\\*
   +\sup_{\norm{\varphi}_{\text{Lip}}\leq 1}\abs[\bigg]{\int_{\R^d\times I}\varphi(Z_{t}^{x}(\zeta,\mu_0),x)-\varphi(Z_{t}^{x}(\zeta,\tilde{\mu}_0),x) \dd{\tilde{\mu}_0(\zeta,x)}}\\*
   \leq \sqrt{2}\max\{1,L_\W\}\ee^{L_K t}\dist(\mu_0,\tilde{\mu}_0)+\int_{\R^d\times I}\abs{Z_{t}^{x}(\zeta,\mu_0)-Z_{t}^{x}(\zeta,\tilde{\mu}_0)} \dd{\tilde{\mu}_0(\zeta,x)}.
\end{multline*}
By means of Proposition~\ref{Prop:cont:dep:flow} we deduce
\begin{equation*}
   \dist(\mu_t,\tilde{\mu}_{t})   \leq \sqrt{2}\max\{1,L_\W\} \dist(\mu_0,\tilde{\mu}_0) 2\ee^{2L_K t}.
\end{equation*}
\end{proof}

\subsection{The mean-field limit}

In this section, we prove our main statement, i.e.\@ that the empirical measure corresponding to the discrete system \eqref{eq:disc:model:3} converges towards a corresponding continuous mean-field approximation in the limit of infinite particles $N\to\infty$. 
 We recall that the empirical measure corresponding to the solution $(\phi^k)$ of \eqref{eq:disc:model:4} is given as
\begin{equation*}
 \mu^{N}=\frac{1}{N}\sum_{k=1}^{N}\delta_{(\phi^{N,k}_\cdot,x^{k,N})}
\end{equation*}
 where $\delta$ denotes the Dirac delta measure. In analogy to classical mean-field systems, the solution $(\phi_t^k)$ to \eqref{eq:disc:model:3} is determined by the characteristic flow with respect to the empirical measure $\mu^{N}$. Precisely, we have the following statement.
 \begin{proposition}\label{Prop:repr:discr:sol}
  The solution $(\phi_t^{N,k})$ to \eqref{eq:disc:model:3} with initial data $(\phi^{N,k,\text{in}})$ is given by
  \begin{equation*}
   \phi_t^{N,k}=Z_t^{x^{k,N}}(\phi^{N,k,\text{in}},\mu^{N}_0)
  \end{equation*}
 \end{proposition}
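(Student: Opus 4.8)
The plan is to verify directly that the function $t \mapsto Z_t^{x^{k,N}}(\phi^{N,k,\text{in}},\mu^N_0)$ solves the same non-local ODE system \eqref{eq:disc:model:3} with the same initial data, and then invoke the uniqueness provided by Theorem~\ref{Thm:disc:well:posed}. The point is that the empirical measure $\mu^N_0 = \frac{1}{N}\sum_{\ell=1}^N \delta_{(\phi^{N,\ell,\text{in}},x^{\ell,N})}$ has a very special structure that makes the push-forward $\mu_0 = (Z^\cdot_\cdot(\cdot,\mu^N_0),\id)\#\mu^N_0$ a finitely-supported measure, namely $\mu = \frac{1}{N}\sum_{\ell=1}^N \delta_{(Z^{x^{\ell,N}}_\cdot(\phi^{N,\ell,\text{in}},\mu^N_0),\, x^{\ell,N})}$, so that integrals against $\mu_0$ reduce to averages over $\ell \in [N]$.

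First I would write $\psi^k_\cdot \vcentcolon= Z^{x^{k,N}}_\cdot(\phi^{N,k,\text{in}},\mu^N_0)$ and observe that $\psi^k_0 = \phi^{N,k,\text{in}}$ by the initial condition in \eqref{eq:charact:2}, so the initial data match. Then I would plug $\mu_0 = \mu^N_0$ into the evolution equation in \eqref{eq:charact:1} (the form where the integral is over $C([0,t])\times I$ against $\mu$): since $\mu = (Z^\cdot_\cdot(\cdot,\mu^N_0),\id)\#\mu^N_0$ and $\mu^N_0$ is the normalized sum of $N$ Diracs at $(\phi^{N,\ell,\text{in}},x^{\ell,N})$, the push-forward formula gives $\mu = \frac{1}{N}\sum_{\ell=1}^N \delta_{(\psi^\ell_\cdot, x^{\ell,N})}$. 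Substituting this into the integral collapses it to $\frac{1}{N}\sum_{\ell=1}^N K_t(\W(x^{k,N},x^{\ell,N}), r_t \psi^k_\cdot, r_t\psi^\ell_\cdot)$. Recalling that the weights are defined by $w_{k\ell}^{\text{in}} = \W(x^{N,k},x^{N,\ell})$ via \eqref{eq:Lip:weight}, this is exactly $\frac{1}{N}\sum_{\ell=1}^N K_t(w^{\text{in}}_{k\ell}, r_t\psi^k_\cdot, r_t\psi^\ell_\cdot) = \dot{\psi}^k_t$, which is precisely the right-hand side of \eqref{eq:disc:model:3}. The differentiability in $t$ needed to make sense of $\dot{\psi}^k_t$ is guaranteed by the last assertion of Proposition~\ref{Prop:ex:charact:flow}.

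Hence $(\psi^k)_{k\in[N]}$ is a solution of \eqref{eq:disc:model:3} with initial data $(\phi^{N,k,\text{in}})_k$; by the uniqueness part of Theorem~\ref{Thm:disc:well:posed} it coincides with $(\phi^{N,k})_k$, which is the claim. The only mild subtlety — and the step I expect to require the most care in writing out — is the consistency of the two equivalent forms \eqref{eq:charact:1} and \eqref{eq:charact:2} of the characteristic equation when specialized to a discrete measure: one must check that evaluating $Z^y_\cdot(\zeta,\mu^N_0)$ at the atoms $(\zeta,y) = (\phi^{N,\ell,\text{in}}, x^{\ell,N})$ of $\mu^N_0$ indeed reproduces the curves $\psi^\ell_\cdot$, so that the coupled system \eqref{eq:charact:2} restricted to these atoms is literally the finite ODE system \eqref{eq:disc:model:3}. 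This is immediate from the definition $\psi^\ell_\cdot = Z^{x^{\ell,N}}_\cdot(\phi^{N,\ell,\text{in}},\mu^N_0)$, but it is worth stating explicitly since it is the mechanism by which the infinite-dimensional characteristic flow restricts consistently to the $N$-particle dynamics.
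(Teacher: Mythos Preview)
Your proof is correct and follows exactly the approach the paper indicates: the paper states only that the result ``is an immediate consequence of the uniqueness for the discrete system \eqref{eq:disc:model:4} and the equation for characteristics \eqref{eq:charact:2},'' and you have simply written out what that means, namely that specializing \eqref{eq:charact:2} to the empirical initial measure $\mu_0^N$ collapses the integral to the sum in \eqref{eq:disc:model:3}, after which uniqueness (Theorem~\ref{Thm:disc:well:posed}) finishes the argument.
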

 The proof of this statement is an immediate consequence of the uniqueness for the discrete system \eqref{eq:disc:model:4} and the equation for characteristics \eqref{eq:charact:2}.

We can now state and prove our main statement:

\begin{theorem}\label{Thm:mean:field:1}
 Assume that $\W\colon I\times I\to \R$ satisfies \eqref{eq:ass:Lip:W} and let $\phi^{N,k}$ be the solution to \eqref{eq:disc:model:3} with initial datum $(\phi^{N,k,\text{in}})$ and $w_{k\ell}^{\text{in}}$ given by \eqref{eq:Lip:weight}. Let $\mu^{N}=\frac{1}{N}\sum_{k=1}^{N}\delta_{(\phi^{N,k}_\cdot,x^{N,k})}$ be the corresponding empirical measure with $x^{N,k}$ satisfying \eqref{eq:disc}. Assume that $\mu_0^N$ satisfies $\dist(\mu_0^N,\mu_0)\to 0$ as $N\to\infty$ for some $\mu_0\in \mathcal{P}_1(\R^d\times I)$. Then $\mu^N$ converges to the mean-field limit given by $\mu=(Z_\cdot^\cdot(\cdot,\mu_0),\id)\#\mu_0$ where $Z$ is defined through \eqref{eq:charact:2}, i.e.\@ $\dist(\mu^N,\mu)\to 0$ as $N\to\infty$. Moreover, $\mu$ satisfies the following weak equation:
 \begin{multline}\label{eq:mfl}
  \frac{\dd}{\dd{t}}\int_{\R^d\times I}\varphi(\zeta,x)\dd{(e_t,\id)\#\mu(\zeta,x)}\\*
  =\int_{(C([0,T])\times I)\times (C([0,T])\times I)}\del_{\zeta}\varphi(e_t(\gamma),x)e_tK_{\cdot}(\W(x,y),r_t\gamma,r_t\tilde{\gamma})\dd{\mu(\tilde{\gamma},y)}\dd{\mu(\gamma,x)}
 \end{multline}
\end{theorem}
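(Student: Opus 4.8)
The plan is to obtain Theorem~\ref{Thm:mean:field:1} by combining the representation of the discrete solution through the characteristic flow (Proposition~\ref{Prop:repr:discr:sol}) with a path-space version of Dobrushin's estimate — built from Lemma~\ref{Lem:Lip:flow} and Proposition~\ref{Prop:cont:dep:flow} exactly as in the proof of Proposition~\ref{Prop:Dobrushin} — and then to read off the weak equation by differentiating the flow in $t$ and inserting the characteristic equation \eqref{eq:charact:2}.

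\textbf{Convergence of the empirical measures.} By Proposition~\ref{Prop:repr:discr:sol} we have $\phi^{N,k}_\cdot=Z^{x^{N,k}}_\cdot(\phi^{N,k,\text{in}},\mu^N_0)$, so
\begin{equation*}
 \mu^N=(Z^\cdot_\cdot(\cdot,\mu^N_0),\id)\#\mu^N_0\qquad\text{and}\qquad \mu=(Z^\cdot_\cdot(\cdot,\mu_0),\id)\#\mu_0 ;
\end{equation*}
both are probability measures on $C([0,T],\R^d)\times I$ and lie in $\mathcal{P}_1$ since $\norm{Z^x_\cdot(\zeta,\mu_0)}_{C([0,T])}\leq\abs{\zeta}+CT$ by \eqref{Ass:K1}. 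It then suffices to prove the path-space Dobrushin estimate
\begin{equation*}
 \dist(\mu^N,\mu)\leq \sqrt{2}\max\{1,L_\W\}\,2\ee^{2L_KT}\,\dist(\mu^N_0,\mu_0),
\end{equation*}
since the right-hand side tends to $0$ by hypothesis. I would prove this exactly as Proposition~\ref{Prop:Dobrushin}, with the evaluation $Z^x_t(\cdot,\cdot)$ replaced throughout by the full trajectory $Z^x_\cdot(\cdot,\cdot)\in C([0,T],\R^d)$: for a $1$-Lipschitz $\varphi$ on $C([0,T],\R^d)\times I$, split $\int\varphi\,\dd{\mu^N}-\int\varphi\,\dd{\mu}$ into a term where only the base measure changes, with the fixed integrand $(\zeta,x)\mapsto\varphi(Z^x_\cdot(\zeta,\mu^N_0),x)$ — bounded by $\sqrt{2}\max\{1,L_\W\}\ee^{L_KT}\dist(\mu^N_0,\mu_0)$ because that integrand is Lipschitz with this constant by Lemma~\ref{Lem:Lip:flow} and by the dual formulation of $\dist$ — and a term where only the flow changes, bounded by $\int_{\R^d\times I}\norm{Z^x_\cdot(\zeta,\mu^N_0)-Z^x_\cdot(\zeta,\mu_0)}_{C([0,T])}\dd{\mu_0(\zeta,x)}$, which is controlled by Proposition~\ref{Prop:cont:dep:flow} (applied with the two measures $\mu^N_0$ and $\mu_0$). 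Adding the bounds yields the displayed estimate and hence $\dist(\mu^N,\mu)\to0$.

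\textbf{The weak equation.} Let $\varphi\in C^1_b(\R^d\times I)$ (Lipschitz and $C^1$ in the first variable). Using $(e_t,\id)\#\mu=(Z^\cdot_t(\cdot,\mu_0),\id)\#\mu_0$ I would write
\begin{equation*}
 \int_{\R^d\times I}\varphi(\zeta,x)\dd{(e_t,\id)\#\mu(\zeta,x)}=\int_{\R^d\times I}\varphi\bigl(Z^x_t(\zeta,\mu_0),x\bigr)\dd{\mu_0(\zeta,x)} .
\end{equation*}
By Proposition~\ref{Prop:ex:charact:flow}, $t\mapsto Z^x_t(\zeta,\mu_0)$ is continuously differentiable with $\abs{\del_tZ^x_t(\zeta,\mu_0)}\leq C$ uniformly by \eqref{Ass:K1}; together with $\varphi\in C^1$ with bounded derivative this licenses differentiation under the integral sign (dominated convergence), so that
\begin{equation*}
 \frac{\dd}{\dd{t}}\int_{\R^d\times I}\varphi\dd{(e_t,\id)\#\mu}=\int_{\R^d\times I}\del_\zeta\varphi\bigl(Z^x_t(\zeta,\mu_0),x\bigr)\cdot\del_tZ^x_t(\zeta,\mu_0)\dd{\mu_0(\zeta,x)} .
\end{equation*}
Inserting \eqref{eq:charact:2} for $\del_tZ^x_t(\zeta,\mu_0)$ and using Fubini (valid since $K$ and $\del_\zeta\varphi$ are bounded and $\mu_0$ is finite) turns this into a double integral over $(\R^d\times I)^2$ against $\mu_0\otimes\mu_0$; finally, performing in both variables the change of variables $(\zeta,x)\mapsto(Z^x_\cdot(\zeta,\mu_0),x)$ — which pushes $\mu_0$ forward to $\mu$ — and recognising $Z^x_t=e_tZ^x_\cdot$ and $K_t=e_tK_\cdot$ (the Remark following \eqref{Ass:K3}) gives exactly the right-hand side of \eqref{eq:mfl}.

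\textbf{Main obstacle.} I do not expect a genuinely new difficulty. In the convergence part the only thing to check is that the estimates of Lemma~\ref{Lem:Lip:flow} and Proposition~\ref{Prop:cont:dep:flow} are already stated in the $C([0,T])$-norm on trajectories, so that the proof of Proposition~\ref{Prop:Dobrushin} transfers without change; one should merely fix once and for all a product metric on $C([0,T],\R^d)\times I$ and note the elementary $\mathcal{P}_1$-bound above. For the weak equation the single delicate point is the interchange of $\frac{\dd}{\dd{t}}$ with the integral over $\mu_0$, for which the uniform bound on $\del_tZ$ coming from \eqref{Ass:K1} is precisely what is needed; the remainder is bookkeeping with push-forwards.
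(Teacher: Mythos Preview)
Your proposal is correct and follows the same approach as the paper: convergence via Dobrushin's estimate built from Lemma~\ref{Lem:Lip:flow} and Proposition~\ref{Prop:cont:dep:flow}, and the weak equation by differentiating along the characteristics. The paper's own proof is extremely terse (``an immediate consequence of Proposition~\ref{Prop:Dobrushin}'' and ``follows from the definition of the characteristics''); you are in fact more careful in noting that the Wasserstein distance in the statement is on path space $C([0,T],\R^d)\times I$ whereas Proposition~\ref{Prop:Dobrushin} is formulated for the time-$t$ marginals, and that the lift is immediate because Lemma~\ref{Lem:Lip:flow} and Proposition~\ref{Prop:cont:dep:flow} already control the full $C([0,T])$-norm.
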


\begin{proof}
 The  convergence of $\mu^N$ to $\mu$ is an immediate consequence of Proposition~\ref{Prop:Dobrushin}. The second part follows from the definition of the characteristics.
\end{proof}

\begin{remark}
 The mean field limit $\mu$ is also characterised as the unique fixed-point via \eqref{eq:charact:1}. More precisely, for a given measure $\mu\in\mathcal{P}(C([0,T],\R^d)\times I)$ one can prove that there exists a unique solution to 
 \begin{equation}\label{eq:charact:3}
 \begin{split}
  \del_{t}Y^x_t(\zeta^{\text{in}},\mu)&=\int_{C([0,t])\times I}K_t(\W(x,y),r_t Y^x_\cdot(\zeta^{\text{in}},\mu),r_t \gamma_\cdot)\dd{\mu(\gamma,y)}=\vcc \mathcal{K}_t[\W,\mu](Y^x_\cdot(\zeta^{\text{in}},\mu),x,t)\\
  Y^x_0(\zeta^{\text{in}},\mu)&=\zeta^{\text{in}}.
 \end{split}
\end{equation}
For a given $\mu_0\in\mathcal{P}_1(\R^d\times I)$ one can then check $\mu$ from Theorem~\ref{Thm:mean:field:1} is the unique fixed point of the equation $\mu=(Y_\cdot^\cdot(\cdot,\mu),\id)\#\mu_0$. The procedure follows essentially the same lines as for static networks in e.g.\@ \cite{ChM16,KaM18}.
\end{remark}

\subsection{Relation to the continuum limit}\label{Sec:cont:lim}

In \cite{AyP21} a specific form of adaptive network model motivated by opinion formation has been considered, where each particle $z_k$ carries a dynamic weight $m_k$ whose evolution is coupled to the dynamics of the particles, i.e.\@ one has a system of the form
\begin{equation}\label{eq:opinion}
 \begin{split}
  \dot{z}_{k}&=\frac{1}{N}\sum_{\ell=1}^{N}m_\ell(t)g(z_\ell(t)-z_k(t))\\
  \dot{m}_k&=f_k(z,m)
 \end{split}
\end{equation}
where $z=(z_1,\ldots,z_N)$ and $m=(m_1,\ldots,m_N)$, i.e.\@ the weight dynamics depends on the whole set of particles and weights. For the specific form \eqref{eq:opinion}, including the particles' weights into the definition of the empirical measure, i.e.\@ $\mu^N=\sum_{k=1}^{N}m_k\delta_{z_k}$, it is possible to derive a classical mean-field equation up to an additional source term. Moreover, \cite{AyP21} noted a close connection between this mean-field equation and the continuum limit equation corresponding to \eqref{eq:opinion}. In the following we will briefly outline that there is an analogous relation for \eqref{eq:disc:model:3} and the mean-field limit \eqref{eq:mfl}. We note that in \cite{Thr24} the \emph{continuum limit} has been derived for adaptive network models including particularly \eqref{eq:disc:model:1}. However, the result can be generalised for non-local systems like \eqref{eq:disc:model:3} in a straightforward manner. More precisely, following the procedure in \cite{Thr24}, we define 
\begin{equation*}
 u^{N}(t,x)=\sum_{k=1}^{N}\phi_t^k \chi_{I_{k}}^{N}(x)
\end{equation*}
where $\phi_t^k$ is the solution to \eqref{eq:disc:model:3}. Moreover, we define the sequence of graphons
\begin{equation*}
 \W^N(x,y)=\sum_{k,\ell=1}^{N}\W(x_k,x_\ell)\chi_{I_k^N}(x)\chi_{I_\ell^N}(y).
\end{equation*}
Proceeding as in \cite{Thr24} (see also \cite{Med14,KaM17,Med19a}) one can show that, as $N\to\infty$, the step function $u^N$ converges to a limit $u$ which solves the equation
\begin{equation}\label{eq:cont:limit}
\del_{t}u(t,x)=\int_{I}K_t(\W(x,y),r_t u(\cdot,x),r_t u(\cdot,y))\dd{y}; \qquad u(0,\cdot)=u_0
\end{equation}
provided that $u^N(0,\cdot)$ converges to $u_0$ with respect to $\norm{\cdot}_{L^2}$.
\begin{remark}
 We note that \cite[Theorem 1.1]{Thr24} immediately applies to \eqref{eq:disc:model:1} which gives the continuum limit 
 \begin{equation*}
  \begin{split}
   \del_{t}u(t,x)&=\int_{I}w(t,x,y)C(u(t,x),u(t,y))\dd{y}\\
   \del_{t}w(t,x,y)&=F(w(t,x,y),u(t,x),u(t,y)).
  \end{split}
 \end{equation*}
 From this the equation \eqref{eq:cont:limit} follows for \eqref{eq:disc:model:1} using \eqref{eq:network:flow}.
\end{remark}
\begin{proposition}
Assume that $K$ satisfies \eqref{Ass:K1}--\eqref{Ass:K3} and $\W$ satisfies \eqref{eq:ass:Lip:W}. Let $u$ solve \eqref{eq:cont:limit}, then 
 \begin{equation*}
  \mu=\delta_{u(\cdot,x)}\lambda(x)
 \end{equation*}
with $\lambda$ denoting the Lebesgue measure on $I$, is a solution to the mean-field limit.
\end{proposition}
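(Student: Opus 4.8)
The plan is to show that $\mu$ is the mean-field limit built from the initial measure $\mu_0\vcc=(e_0,\id)\#\mu=\delta_{u_0(x)}\lambda(x)$, i.e.\@ that it is the fixed point of the map $\nu\mapsto(Y_\cdot^\cdot(\cdot,\nu),\id)\#\mu_0$ appearing in the remark after Theorem~\ref{Thm:mean:field:1} (equivalently, that $\mu$ satisfies the weak equation \eqref{eq:mfl}). The crucial observation is that the non-local characteristic equation \eqref{eq:charact:3} degenerates, when the ambient measure is precisely this $\mu$, to the continuum-limit equation \eqref{eq:cont:limit}. Indeed, for $\mu=\delta_{u(\cdot,x)}\lambda(x)$ one has $\int G(\gamma,y)\dd{\mu(\gamma,y)}=\int_I G(u(\cdot,y),y)\dd{y}$ for every admissible functional $G$, so \eqref{eq:charact:3} becomes
\begin{equation*}
 \del_t Y^x_t(\zeta^{\text{in}},\mu)=\int_I K_t\bigl(\W(x,y),r_t Y^x_\cdot(\zeta^{\text{in}},\mu),r_t u(\cdot,y)\bigr)\dd{y},\qquad Y^x_0(\zeta^{\text{in}},\mu)=\zeta^{\text{in}},
\end{equation*}
which is exactly \eqref{eq:cont:limit} with $x$ held fixed and $\zeta^{\text{in}}$ in place of $u_0(x)$.

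Concretely I would proceed in three steps. First, verify that $\mu$ is a genuine element of $\mathcal{P}(C([0,T],\R^d)\times I)$ and that $\mu_0\in\mathcal{P}_1(\R^d\times I)$: this reduces to the measurability of the map $I\ni x\mapsto u(\cdot,x)\in C([0,T],\R^d)$, which is inherited from the construction of $u$ in \cite{Thr24} as an $L^2$-limit of the step functions $u^N$ together with the time-continuity of $u$ built into \eqref{eq:cont:limit}, and to $u_0\in L^1(I)$, which holds since $u_0\in L^2(I)$ on the bounded set $I$. Second, invoke the unique solvability of \eqref{eq:charact:3} (obtained, as recorded in the remark, by the same Banach fixed-point argument as for \eqref{eq:charact:2} under \eqref{Ass:K1}--\eqref{Ass:K3} and \eqref{eq:ass:Lip:W}): since $u(\cdot,x)$ solves the displayed equation with initial datum $u_0(x)$ (that being \eqref{eq:cont:limit} read at fixed $x$), uniqueness forces $Y^x_\cdot(u_0(x),\mu)=u(\cdot,x)$ for a.e.\@ $x\in I$. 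Third, push $\mu_0=\delta_{u_0(x)}\lambda(x)$ forward under $(\zeta,x)\mapsto(Y^x_\cdot(\zeta,\mu),x)$; the fibre $\delta_{u_0(x)}$ is sent to $\delta_{Y^x_\cdot(u_0(x),\mu)}=\delta_{u(\cdot,x)}$, so $(Y_\cdot^\cdot(\cdot,\mu),\id)\#\mu_0=\delta_{u(\cdot,x)}\lambda(x)=\mu$. Hence $\mu$ is a fixed point of the mean-field map associated with $\mu_0$, i.e.\@ a solution of the mean-field limit, and \eqref{eq:mfl} then follows as in Theorem~\ref{Thm:mean:field:1}.

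Alternatively one can check \eqref{eq:mfl} head-on: differentiating $t\mapsto\int_I\varphi(u(t,x),x)\dd{x}$ under the integral (legitimate since $u$ is $C^1$ in $t$ by \eqref{eq:cont:limit} and $\varphi$ is as regular as \eqref{eq:mfl} requires), then substituting $\del_t u(t,x)=\int_I K_t(\W(x,y),r_t u(\cdot,x),r_t u(\cdot,y))\dd{y}$, and finally unfolding the two $\dd{\mu}$-integrals on the right-hand side of \eqref{eq:mfl} as $\int_I\dd{x}$ and $\int_I\dd{y}$ together with $e_t(u(\cdot,x))=u(t,x)$, one sees that the two sides coincide.

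The analysis here is light; the only points that genuinely need a word are the measurability of $x\mapsto u(\cdot,x)$ into the path space $C([0,T],\R^d)$ --- so that $\delta_{u(\cdot,x)}\lambda(x)$ is a bona fide Borel measure there and not merely a measurable field of fibre measures --- and the availability of the uniqueness statement for \eqref{eq:charact:3}. Both are routine under the Lipschitz graphon hypothesis \eqref{eq:ass:Lip:W} and the regularity of $u$ from the continuum-limit construction, so I expect no real obstacle beyond this bookkeeping.
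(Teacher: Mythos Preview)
Your proposal is correct and follows essentially the same route as the paper: identify the characteristic started at $u_0(x)$ with $u(\cdot,x)$ via uniqueness, then compute the push-forward of $\mu_0=\delta_{u_0(x)}\lambda(x)$ to recover $\mu$. The only cosmetic difference is that you work with the fixed-point formulation \eqref{eq:charact:3} (parametrised by the path measure $\mu$), which decouples in $x$ and makes the reduction to \eqref{eq:cont:limit} slightly more transparent, whereas the paper uses \eqref{eq:charact:2} (parametrised by $\mu_0$) directly; the extra measurability and moment checks you flag are not spelled out in the paper but are indeed routine.
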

\begin{proof}
 By definition
 \begin{equation*}
  \del_{t}u(t,x)=\int_{I}K_t(\W(x,y),r_t u(\cdot,x),r_t u(\cdot,y))\dd{y}; \qquad u(0,\cdot)=u_0.
\end{equation*}
On the other hand from \eqref{eq:charact:2} with $\mu_0=e_0\#\mu$ and $\zeta^{\text{in}}=u_0$ we get
\begin{equation*}
  \del_{t}Z^x_t(u_0,\mu_0)=\int_{I}K_t(\W(x,y),r_t Z^x_\cdot(u_0,\mu_0),r_t Z^y_{\cdot}(u_0,\mu_0))\dd{y}.
\end{equation*}
By uniqueness of the solution we get $u(t,x)=Z_t^x(u_0,\mu_0)$ and thus 
\begin{equation*}
 (Z_\cdot^{\cdot}(\cdot,\mu_0),\id)\#\mu_0=\delta_{Z_\cdot^x(u_0,\mu_0)}\lambda(x)=\delta_{u(\cdot,x)}\lambda(x).
\end{equation*}
\end{proof}

\section{The mean-field limit for non-Lipschitz graphs}\label{Sec:non:Lip:graph}

In this section, we outline how the approximation of \eqref{eq:disc:model:3} can be generalised to initial networks which are generated from graphons without a Lipschitz condition. We repeat essentially the main steps from the previous section following mainly the notation and approach in \cite{Gol16} with suitable adaptations to the underlying initial network structure for which we rely on the approach in \cite{KaM18}. In particular, we assume that $\W$ satisfies the following continuity property
\begin{equation}\label{eq:W:cont}
 \int_{I}\abs{\W(x+\delta,y)-\W(x,y)}\dd{y}\to 0 \qquad \text{as } \delta\to 0 \quad \text{for a.e. } x\in I.
\end{equation}

\subsection{The characteristic equation}

We consider the following equation of characteristics with initial measure $\mu_0^x$.
\begin{equation}\label{eq:charact:1:non:Lip}
 \begin{split}
  \del_{t}Z^x_t(\zeta^{\text{in}},\W,\mu^x_0)&=\int_{I}\int_{C([0,t])}K_t(\W(x,y),r_t Z^x_\cdot(\zeta^{\text{in}},\W,\mu^x_0),\gamma_\cdot)\dd{\mu^{y}(\gamma)}\dd{y}
  \\
  \mu^{x}&=Z^x_{\cdot}(\cdot,\W,\mu^x_0)\#\mu^x_0\\
  Z^x_0(\zeta^{\text{in}},\W,\mu^x_0)&=\zeta^{\text{in}}
 \end{split}
\end{equation}
where $\zeta^{\text{in}}\in\R^d$, $\mu^x_0\in \mathcal{P}_1(\R^d)$. We consider $Z^x_{\cdot}(\cdot,\W,\mu^x_0)\colon \R^d\to C([0,T],\R^d)$ via $\R^d\ni \zeta\mapsto Z^x_{\cdot}(\zeta,\W,\mu^x_0)$

In analogy to \eqref{eq:charact:2}, we can reformulate \eqref{eq:charact:1} equivalently as
\begin{equation}\label{eq:charact:2:non:Lip}
 \begin{split}
  \del_{t}Z^x_t(\zeta^{\text{in}},\W,\mu^x_0)&=\int_{I}\int_{\R^d}K_t(\W(x,y),r_t Z^x_\cdot(\zeta^{\text{in}},\W,\mu^x_0),r_t Z^y_{\cdot}(\zeta,\W,\mu^y_0))\dd{\mu^y_0(\zeta)}\dd{y}
  \\*
  Z^x_0(\zeta^{\text{in}},\W,\mu^x_0)&=\zeta^{\text{in}}
 \end{split}
\end{equation}
We have well-posedness of the characteristic equation.
\begin{proposition}\label{Prop:ex:charact:flow:non:Lip}
 Assume that $K$ satisfies \eqref{Ass:K1}--\eqref{Ass:K3} and $W$ satisfies \eqref{eq:W:cont}. For each $\mu^x_0\in\mathcal{P}_1(\R^d)$ the equation \eqref{eq:charact:2:non:Lip} has a unique solution $[0,T]\times I\times \R^d \ni (t,x,\zeta^{\text{in}})\mapsto Z_{t}^{x}(\zeta^{\text{in}},\W,\mu^x_0)\in\R^d$. Moreover, $Z$ is differentiable with respect to the time variable $t$.
\end{proposition}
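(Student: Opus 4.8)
Proposition~\ref{Prop:ex:charact:flow:non:Lip} is the analogue of Proposition~\ref{Prop:ex:charact:flow} for the non-Lipschitz setting, so the plan is to mirror that proof, i.e.\@ a Banach fixed-point argument on a suitable weighted space, while being careful that the graphon $\W$ now only enters as an $L^\infty$ (in fact bounded measurable, in view of the boundedness implicit in \eqref{eq:W:cont} together with the assumptions inherited from the Lipschitz case) kernel rather than a continuous one. Concretely, I would fix $\mu_0^{\cdot}\in\mathcal{P}_1(\R^d)^I$ (a measurable family) and reformulate \eqref{eq:charact:2:non:Lip} in integrated form as the fixed-point equation
\begin{equation*}
 Z_t^x(\zeta^{\text{in}})=\zeta^{\text{in}}+\int_0^t\int_I\int_{\R^d}K_s\bigl(\W(x,y),r_s Z^x_\cdot(\zeta^{\text{in}}),r_s Z^y_\cdot(\zeta)\bigr)\dd{\mu_0^y(\zeta)}\dd{y}\dd{s}=\vcc\mathcal{G}[Z]_t^x(\zeta^{\text{in}}),
\end{equation*}
and work on the space of maps $[0,T]\times I\times\R^d\ni(t,x,\zeta)\mapsto Z_t^x(\zeta)\in\R^d$ that are continuous in $t$, measurable in $(x,\zeta)$, and satisfy $\sup_{\zeta}\sup_{(t,x)}\frac{\abs{Z_t^x(\zeta)}}{1+\abs{\zeta}}<\infty$, equipped with the norm $\norm{Z}=\sup_{\zeta\in\R^d}\sup_{(t,x)\in[0,T]\times I}\ee^{-\alpha t}\frac{\abs{Z_t^x(\zeta)}}{1+\abs{\zeta}}$ for $\alpha$ chosen large (depending on $L_K$), exactly as in the cited \cite[Theorem~1.3.2]{Gol16}.

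The key steps are then: (i) check that $\mathcal{G}$ maps this space into itself — the linear-growth bound uses \eqref{Ass:K1} (boundedness of $K$) so that $\abs{\mathcal{G}[Z]_t^x(\zeta^{\text{in}})}\leq\abs{\zeta^{\text{in}}}+CT$, while the measurability in $(x,\zeta)$ of $\mathcal{G}[Z]$ follows from measurability of $\W$, of $\mu_0^{\cdot}$, and of $Z$, together with a Fubini/Carathéodory argument (the integrand is jointly measurable in $(x,y,\zeta)$ and continuous in the $Z$-slots); continuity in $t$ is immediate from \eqref{Ass:K2} and dominated convergence; (ii) show $\mathcal{G}$ is a contraction: using \eqref{Ass:K3}, for two candidates $Z,\tilde Z$,
\begin{equation*}
 \abs{\mathcal{G}[Z]_t^x(\zeta^{\text{in}})-\mathcal{G}[\tilde Z]_t^x(\zeta^{\text{in}})}\leq L_K\int_0^t\Bigl(\norm{r_s(Z^x_\cdot(\zeta^{\text{in}})-\tilde Z^x_\cdot(\zeta^{\text{in}}))}_{C([0,s])}+\int_I\int_{\R^d}\norm{r_s(Z^y_\cdot(\zeta)-\tilde Z^y_\cdot(\zeta))}_{C([0,s])}\dd{\mu_0^y(\zeta)}\dd{y}\Bigr)\dd{s},
\end{equation*}
and after dividing by $1+\abs{\zeta^{\text{in}}}$ (and using that $1+\abs{\zeta}\geq1$ in the inner integral against the probability measures $\mu_0^y$, which have bounded first moment so the integrals converge) one bounds the right-hand side by $2L_K\int_0^t\ee^{\alpha s}\dd{s}\,\norm{Z-\tilde Z}\leq\frac{2L_K}{\alpha}\ee^{\alpha t}\norm{Z-\tilde Z}$, giving contraction once $\alpha>2L_K$; (iii) conclude existence and uniqueness of the fixed point $Z$, and read off time-differentiability of $t\mapsto Z_t^x(\zeta^{\text{in}})$ directly from the integrated equation, since the integrand $s\mapsto\int_I\int_{\R^d}K_s(\dots)\dd{\mu_0^y(\zeta)}\dd{y}$ is continuous in $s$ by \eqref{Ass:K2} and \eqref{Ass:K1}.

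The main obstacle, compared with the Lipschitz case of Proposition~\ref{Prop:ex:charact:flow}, is measurability in $x$: since $\W$ is no longer continuous, one cannot expect $Z_t^x$ to be continuous in $x$, so the fixed-point space must be set up with measurability (rather than continuity) in $x$, and one must verify that $\mathcal{G}$ preserves this — i.e.\@ that $x\mapsto\int_I\int_{\R^d}K_s(\W(x,y),\dots)\dd{\mu_0^y(\zeta)}\dd{y}$ is measurable whenever $Z$ is. This is where joint measurability of $(x,y)\mapsto\W(x,y)$, measurability of $y\mapsto\mu_0^y$, and the continuity of $K_s$ in its last three arguments combine (the composition of a Carathéodory map with measurable functions is measurable, and the $y$-integral of a jointly measurable nonnegative-dominated family is measurable by Fubini–Tonelli); the property \eqref{eq:W:cont} itself is not needed here but only later for the mean-field approximation argument. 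Everything else is a routine transcription of the Lipschitz proof with the single kernel-evaluation $\int_{\R^d\times I}\dd{\mu_0}$ replaced by the fibred $\int_I\int_{\R^d}\dd{\mu_0^y}\dd{y}$.
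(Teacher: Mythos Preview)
Your proposal is correct and follows exactly the approach the paper indicates: the paper gives no proof, merely stating that it is ``again a variation of the corresponding proof in \cite[Theorem~1.3.2]{Gol16} using a variation of Banach's fixed point argument'' with details omitted, which is precisely what you spell out. Your identification of measurability (rather than continuity) in $x$ as the only new ingredient compared to Proposition~\ref{Prop:ex:charact:flow}, and your remark that \eqref{eq:W:cont} is not actually needed for this existence result, are both on point; the only minor slip is that the contraction constant should carry the integrated first moment $\int_I\int_{\R^d}(1+\abs{\zeta})\dd{\mu_0^y(\zeta)}\dd{y}$ rather than the bare factor $2$, but this is absorbed by choosing $\alpha$ large enough.
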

The proof is again a variation of the corresponding proof in \cite[Theorem 1.3.2]{Gol16} using a variation of Banach's fixed point argument and we omit the details.

The characteristics are still Lipschitz continuous with respect to the initial conditions. However, due to the lack of a Lipschitz condition for $\W$, we no longer have Lipschitz continuity with respect to $x$. More precisely, we have the following statement.
\begin{lemma}\label{Lem:Lip:flow:non:Lip}
 Assume that $K$ satisfies \eqref{Ass:K1}--\eqref{Ass:K3} and let $\W$ satisfy \eqref{eq:ass:Lip:W}. The characteristic flow $Z^x_t(\zeta^{\text{in}},\W,\mu^x_0)$ given by Proposition~\ref{Prop:ex:charact:flow:non:Lip} is Lipschitz continuous with respect to the initial datum $\zeta^{\text{in}}$. More precisely, 
   \begin{equation*}
  \norm{Z^x_\cdot(\zeta^{\text{in}},\W,\mu^x_0)-Z^x_\cdot(\tilde{\zeta}^{\text{in}},\W,\mu^x_0)}_{C([0,t])}\leq \ee^{L_K t}\abs{\zeta^{\text{in}}-\tilde{\zeta^{\text{in}}}}
 \end{equation*}
  uniformly in $\mu_0^x$ and $x$. In particular, we have the estimate
 \begin{equation*}
  \norm{Z^x_\cdot(\zeta^{\text{in}},\W,\mu^x_0)-Z^x_\cdot(\tilde{\zeta}^{\text{in}},\W,\mu^x_0)}_{C([0,T])}\leq \ee^{L_K T}\abs{\zeta^{\text{in}}-\tilde{\zeta^{\text{in}}}}.
 \end{equation*}
\end{lemma}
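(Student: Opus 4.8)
The plan is to closely follow the structure of the proof of Lemma~\ref{Lem:Lip:flow}, noting that the argument simplifies substantially here because we compare two characteristics carrying the \emph{same} spatial label $x$ and the same spatial initial measure $\mu_0^x$. Writing \eqref{eq:charact:2:non:Lip} in integral form and subtracting the solutions issued from $\zeta^{\text{in}}$ and $\tilde{\zeta}^{\text{in}}$, the initial-data difference $\zeta^{\text{in}}-\tilde{\zeta}^{\text{in}}$ is carried along, and the remaining contribution is the time integral over $\R^d\times I\times[0,t]$, against $\dd{\mu_0^y(\zeta)}\dd{y}\dd{s}$, of
\begin{equation*}
 K_s(\W(x,y),r_s Z^x_\cdot(\zeta^{\text{in}},\W,\mu_0^x),r_s Z^y_\cdot(\zeta,\W,\mu_0^y))-K_s(\W(x,y),r_s Z^x_\cdot(\tilde{\zeta}^{\text{in}},\W,\mu_0^x),r_s Z^y_\cdot(\zeta,\W,\mu_0^y)).
\end{equation*}

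The key observation is that in this difference the weight argument $\W(x,y)$ and the partner argument $r_s Z^y_\cdot(\zeta,\W,\mu_0^y)$ coincide in both terms — the latter because the $y$-characteristic is decoupled from the $x$-initial datum — so \eqref{Ass:K3} bounds the difference by $L_K\norm{r_s Z^x_\cdot(\zeta^{\text{in}},\W,\mu_0^x)-r_s Z^x_\cdot(\tilde{\zeta}^{\text{in}},\W,\mu_0^x)}_{C([0,s])}$, which no longer depends on $(\zeta,y)$; since $\mu_0^y$ is a probability measure and $y$ is integrated over $I$ (total mass one), the double integral drops and we obtain
\begin{equation*}
 \abs{Z_t^x(\zeta^{\text{in}},\W,\mu_0^x)-Z_t^x(\tilde{\zeta}^{\text{in}},\W,\mu_0^x)}\leq \abs{\zeta^{\text{in}}-\tilde{\zeta}^{\text{in}}}+L_K\int_0^t\norm{Z^x_\cdot(\zeta^{\text{in}},\W,\mu_0^x)-Z^x_\cdot(\tilde{\zeta}^{\text{in}},\W,\mu_0^x)}_{C([0,s])}\dd{s}.
\end{equation*}
Taking the supremum over $t\in[0,\tau]$ on the left — legitimate since the right-hand side is nondecreasing in its upper endpoint, exactly as in Lemma~\ref{Lem:Lip:flow} — yields the closed inequality $g(\tau)\leq \abs{\zeta^{\text{in}}-\tilde{\zeta}^{\text{in}}}+L_K\int_0^\tau g(s)\dd{s}$ for $g(\tau)\vcc=\norm{Z^x_\cdot(\zeta^{\text{in}},\W,\mu_0^x)-Z^x_\cdot(\tilde{\zeta}^{\text{in}},\W,\mu_0^x)}_{C([0,\tau])}$, and Gronwall's inequality gives $g(t)\leq \ee^{L_K t}\abs{\zeta^{\text{in}}-\tilde{\zeta}^{\text{in}}}$ for all $t\in[0,T]$; specialising to $t=T$ gives the second estimate.

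I do not expect any genuine obstacle: the only points needing a little care are the passage from the pointwise-in-$t$ estimate to the Gronwall inequality for the running supremum (as in Lemma~\ref{Lem:Lip:flow}) and the bookkeeping that the constant $\ee^{L_K t}$ contains neither $x$ nor $\mu_0^x$, which is precisely the claimed uniformity. It is worth stressing that the term $\W(x,y)$ cancels identically in the above difference, so no continuity or boundedness property of $\W$ enters the proof — only \eqref{Ass:K1}--\eqref{Ass:K3} are used — which is the structural reason why, unlike Lemma~\ref{Lem:Lip:flow}, no Lipschitz bound on $\W$ is available or needed here.
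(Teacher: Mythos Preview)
Your proposal is correct and follows essentially the same route as the paper's proof: write \eqref{eq:charact:2:non:Lip} in integral form, subtract, apply \eqref{Ass:K3} noting that the first and third arguments of $K_s$ coincide, collapse the $(\zeta,y)$-integral using that $\mu_0^y$ and $\lambda|_I$ are probability measures, take the running supremum and apply Gronwall. Your remark that no property of $\W$ is actually used is also accurate (the hypothesis \eqref{eq:ass:Lip:W} in the lemma's statement appears to be a leftover from the Lipschitz section and plays no role).
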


\begin{proof}
From \eqref{eq:charact:2:non:Lip} together with \eqref{Ass:K3} we deduce
 \begin{multline*}
 Z_t^{x}(\zeta^{\text{in}},\W,\mu_0^x)-Z_t^{x}(\tilde{\zeta}^{\text{in}},\W,\mu_0^x)=\zeta^{\text{in}}-\tilde{\zeta}^{\text{in}}\\*
\shoveleft{+\int_{0}^{t}\int_{I}\int_{\R^d}\Bigl(K_s(\W(x,y),r_sZ^x_\cdot(\zeta^{\text{in}},\W,\mu_0^x),r_sZ^y_{\cdot}(\zeta,\W,\mu_0^y))}\\*
\shoveright{-K_s(\W(x,y),r_s Z^x_\cdot(\tilde{\zeta}^{\text{in}},\W,\mu_0^x),r_s Z^y_{\cdot}(\zeta,\W,\mu_0^y))\Bigl)\dd{\mu_0^y(\zeta)}\dd{y}\dd{s}}\\*
\leq \zeta^{\text{in}}-\tilde{\zeta}^{\text{in}}+L_K\int_{0}^{t}\int_{I}\int_{\R^d}\norm{Z^x_\cdot(\zeta^{\text{in}},\W,\mu_0^x)-Z^x_\cdot(\tilde{\zeta}^{\text{in}},\W,\mu_0^x)}_{C([0,s])}\dd{\mu_0^y(\zeta)}\dd{y}\dd{s}\\*
\leq \zeta^{\text{in}}-\tilde{\zeta}^{\text{in}}+L_K\int_{0}^{t}\norm{Z^x_\cdot(\zeta^{\text{in}},\W,\mu_0^x)-Z^x_\cdot(\tilde{\zeta}^{\text{in}},\W,\mu_0^x)}_{C([0,s])}\dd{s}.
\end{multline*}
The claim follows upon taking the supremum of the absolute value and applying Gronwall's inequality.
\end{proof}

The next statement provides Lipschitz continuity with respect to the initial measure and the graphon. The latter property will compensate for the lack of Lipschitz continuity of $\W$.
\begin{proposition}\label{Prop:cont:dep:flow:non:Lip}
Assume that $K$ satisfies \eqref{Ass:K1}--\eqref{Ass:K3} and that $\W$ and $\tilde{\W}$ satisfy \eqref{eq:W:cont}. The characteristics provided by Proposition~\ref{Prop:ex:charact:flow:non:Lip} satisfy
\begin{multline*}
 \int_{I}\int_{\R^d}\norm{Z_\cdot^{x}(\zeta^{\text{in}},\W,\mu_0^x)-Z_\cdot^{x}(\zeta^{\text{in}},\tilde{\W},\tilde{\mu}_0^x)}_{C([0,t])}\dd{\tilde{\mu}_0^x(\zeta^{\text{in}})}\dd{x}\\*
 \leq \dist^I(\mu_0,\tilde{\mu}_0)(2\ee^{2L_K t}-\ee^{L_K t})+\frac{1}{2}\norm{\W-\tilde{\W}}_{L^1(I^2)}(\ee^{2L_K t}-1).
\end{multline*}
\end{proposition}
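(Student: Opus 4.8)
The plan is to adapt the argument of Proposition~\ref{Prop:cont:dep:flow}: the same add-and-subtract plus Gronwall scheme applies, but now an extra term accounting for the change of graphon appears, which is controlled by the $L^1$-norm, and the change of initial data must be controlled slicewise in $x$ via $\dist^I$ rather than jointly on $\R^d\times I$ via a single Wasserstein distance.

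First I would fix $x\in I$ and $\zeta^{\text{in}}\in\R^d$. Since the two flows share the same initial value, \eqref{eq:charact:2:non:Lip} expresses $Z^x_t(\zeta^{\text{in}},\W,\mu_0^x)-Z^x_t(\zeta^{\text{in}},\tilde{\W},\tilde{\mu}_0^x)$ as the time integral over $[0,t]$ of the difference of the two right-hand sides, and I would insert intermediate expressions to decompose this difference into three parts: (i) replacing $\W$ by $\tilde{\W}$ in the first argument of $K_s$ while keeping the $(\W,\mu_0)$-flows and integrating against $\dd{\tilde{\mu}_0^y(\zeta)}\dd{y}$; (ii) replacing each inner measure $\mu_0^y$ by $\tilde{\mu}_0^y$, the integrand now involving $\tilde{\W}$ and the $(\W,\mu_0)$-flows only; (iii) replacing the $(\W,\mu_0)$-flows inside $K_s$ by the $(\tilde{\W},\tilde{\mu}_0)$-flows, integrated against $\dd{\tilde{\mu}_0}$. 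Part (i) is bounded using \eqref{Ass:K3} by $L_K\int_I\abs{\W(x,y)-\tilde{\W}(x,y)}\dd{y}$, since each $\tilde{\mu}_0^y$ is a probability measure. For (ii) I would note that, by Lemma~\ref{Lem:Lip:flow:non:Lip}, the map $\zeta\mapsto r_sZ^y_\cdot(\zeta,\W,\mu_0^y)$ is Lipschitz with constant $\ee^{L_K s}$ uniformly in $y$ and $\mu_0^y$, so that by \eqref{Ass:K3} the integrand of the inner $\R^d$-integral is, for each fixed $y$, Lipschitz in $\zeta$ with constant $L_K\ee^{L_K s}$; Kantorovich--Rubinstein duality followed by integration over $y\in I$ then bounds (ii) by $L_K\ee^{L_K s}\int_I\dist(\mu_0^y,\tilde{\mu}_0^y)\dd{y}=L_K\ee^{L_K s}\dist^I(\mu_0,\tilde{\mu}_0)$. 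Part (iii) is bounded using \eqref{Ass:K3} by $L_K\norm{Z^x_\cdot(\zeta^{\text{in}},\W,\mu_0^x)-Z^x_\cdot(\zeta^{\text{in}},\tilde{\W},\tilde{\mu}_0^x)}_{C([0,s])}+L_K\int_I\int_{\R^d}\norm{Z^y_\cdot(\zeta,\W,\mu_0^y)-Z^y_\cdot(\zeta,\tilde{\W},\tilde{\mu}_0^y)}_{C([0,s])}\dd{\tilde{\mu}_0^y(\zeta)}\dd{y}$.

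Next I would take absolute values, use the monotonicity of the time integral to pass to the $C([0,s])$-norms of the flow differences, take the supremum over $t$, and then integrate against $\dd{\tilde{\mu}_0^x(\zeta^{\text{in}})}\dd{x}$. Writing
\[
 G_t\vcc=\int_I\int_{\R^d}\norm{Z^x_\cdot(\zeta^{\text{in}},\W,\mu_0^x)-Z^x_\cdot(\zeta^{\text{in}},\tilde{\W},\tilde{\mu}_0^x)}_{C([0,t])}\dd{\tilde{\mu}_0^x(\zeta^{\text{in}})}\dd{x},
\]
both flow-difference terms coming from (iii) collapse to a single $G_s$, and (using $\ee^{L_K s}-1\le\ee^{L_K t}$ for $s\le t$) I would arrive at
\[
 G_t\le L_K t\norm{\W-\tilde{\W}}_{L^1(I^2)}+\ee^{L_K t}\dist^I(\mu_0,\tilde{\mu}_0)+2L_K\int_0^tG_s\dd{s},
\]
with $G_t<\infty$ because $K$ is bounded by \eqref{Ass:K1}. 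The asserted estimate then follows from the integral form of Gronwall's inequality with the non-decreasing forcing term $a(t)=L_K t\norm{\W-\tilde{\W}}_{L^1(I^2)}+\ee^{L_K t}\dist^I(\mu_0,\tilde{\mu}_0)$, after evaluating the elementary integrals $\int_0^ts\,\ee^{-2L_K s}\dd{s}$ and $\int_0^t\ee^{-L_K s}\dd{s}$.

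The main obstacle is step (ii): the temptation is to bound the change of initial measure by a single Wasserstein distance on $\R^d\times I$, but this is exactly what fails when $\W$ lacks a Lipschitz bound in $y$, since then $(\zeta,y)\mapsto K_s(\W(x,y),\dots)$ need not be Lipschitz in $y$; keeping the $y$-integration on the outside and measuring the initial data by $\dist^I$ is what rescues the argument, and is why the estimate is phrased with $\dist^I$. A minor additional care is that obtaining exactly the prefactors $2\ee^{2L_K t}-\ee^{L_K t}$ and $\tfrac{1}{2}(\ee^{2L_K t}-1)$ requires the sharp integral form of Gronwall rather than the crude bound $G_t\le a(t)\ee^{2L_K t}$.
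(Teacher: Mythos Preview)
Your proof is correct and follows essentially the same scheme as the paper: the paper telescopes into two pieces (change of integrating measure first, then graphon and flows together via a single application of \eqref{Ass:K3}) rather than your three, but both routes lead to the identical integral inequality $G_t\le \ee^{L_K t}\dist^I(\mu_0,\tilde{\mu}_0)+tL_K\norm{\W-\tilde{\W}}_{L^1(I^2)}+2L_K\int_0^tG_s\dd{s}$, after which the paper simply invokes Gronwall. Your remark that the sharp integral form of Gronwall is needed to recover the stated prefactors is apt, as the paper leaves that computation implicit.
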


\begin{proof}
 From the definition of $Z$ via \eqref{eq:charact:2:non:Lip} we have
\begin{multline*}
 Z_t^{x}(\zeta^{\text{in}},\W,\mu_0^x)-Z_t^{x}(\zeta^{\text{in}},\tilde{\W},\tilde{\mu}_0^x)\\*
 \shoveleft{=\int_{0}^{t}\int_{I}\int_{\R^d}K_s(\W(x,y),r_s Z^x_\cdot(\zeta^{\text{in}},\W,\mu_0^x),r_s Z^y_{\cdot}(\zeta,\W,\mu_0^y))\dd{\mu_0^y(\zeta)}\dd{y}\dd{s}}\\*
\shoveright{-\int_{0}^{t}\int_{I}\int_{\R^d}K_s(\tilde{\W}(x,y),r_s Z^x_\cdot(\zeta^{\text{in}},\tilde{\W},\tilde{\mu}_0^x,r_s Z^y_{\cdot}(\zeta,\tilde{\W},\tilde{\mu}_0^y))\dd{\tilde{\mu}_0^y(\zeta)}\dd{y}\dd{s}}\\*
\shoveleft{=\int_{0}^{t}\int_{I}\int_{\R^d}K_s(\W(x,y),r_s Z^x_\cdot(\zeta^{\text{in}},\W,\mu_0^x),r_s Z^y_{\cdot}(\zeta,\W,\mu_0^y))\dd{\mu_0^y(\zeta)}}\\*
\shoveright{-\int_{I}\int_{\R^d}K_s(\W(x,y),r_s Z^x_\cdot(\zeta^{\text{in}},\W,\mu_0^x),r_s Z^y_{\cdot}(\zeta,\W,\mu_0^y))\dd{\tilde{\mu}_0^y(\zeta)}\dd{y}\dd{s}}\\
\shoveleft{+\int_{0}^{t}\int_{I}\int_{\R^d}\Bigl(K_s(\W(x,y),r_s Z^x_\cdot(\zeta^{\text{in}},\W,\mu_0^x),r_s Z^y_{\cdot}(\zeta,\W,\mu_0^y))}\\*
-K_s(\tilde{\W}(x,y),r_s Z^x_\cdot(\zeta^{\text{in}},\tilde{\W},\tilde{\mu}_0^x),r_s Z^y_{\cdot}(\zeta,\tilde{\W},\tilde{\mu}_0^y))\Bigl)\dd{\tilde{\mu}_0^y(\zeta)}\dd{y}\dd{s}.
\end{multline*}
Together with \eqref{Ass:K3} and Lemma~\ref{Lem:Lip:flow:non:Lip} this yields
\begin{multline*}
 Z_t^{x}(\zeta^{\text{in}},\W,\mu_0^x)-Z_t^{x}(\zeta^{\text{in}},\tilde{\W},\tilde{\mu}_0^x)\\*
 \shoveleft{\leq \ee^{L_K t}\dist^I(\mu_0,\tilde{\mu}_0)}\\*
 +L_K\int_{0}^{t}\int_{I}\int_{\R^d}\Bigl(\abs{\W(x,y)-\tilde{\W}(x,y)}+\norm{Z^x_\cdot(\zeta^{\text{in}},\W,\mu_0^x)-Z^x_\cdot(\zeta^{\text{in}},\tilde{\W},\tilde{\mu}_0^x)}_{C([0,s])}\\*
 +\norm{Z^y_{\cdot}(\zeta,\W,\mu_0^y)-Z^y_{\cdot}(\zeta,\tilde{\W},\tilde{\mu}_0^y)}_{C([0,s])}\Bigr)\dd{\tilde{\mu}_0^y(\zeta)}\dd{y}\dd{s}.
\end{multline*}
The monotonicity of the time integral further implies
\begin{multline*}
 \norm{Z_\cdot^{x}(\zeta^{\text{in}},\W,\mu_0^x)-Z_\cdot^{x}(\zeta^{\text{in}},\tilde{\W},\tilde{\mu}_0^x)}_{C([0,t])}\\*
 \shoveleft{\leq \ee^{L_K t}\dist^I(\mu_0,\tilde{\mu}_0)+tL_K \int_{I}\abs{\W(x,y)-\tilde{\W}(x,y)}\dd{y}}\\*
 +L_K\int_{0}^{t}\norm{Z^x_\cdot(\zeta^{\text{in}},\W,\mu_0^x)-Z^x_\cdot(\zeta^{\text{in}},\tilde{\W},\tilde{\mu}_0^x)}_{C([0,s])}\dd{s}\\*
 +L_K\int_{0}^{t}\int_{I}\int_{\R^d}\norm{Z^y_{\cdot}(\zeta,\W,\mu_0^y)-Z^y_{\cdot}(\zeta,\tilde{\W},\tilde{\mu}_0^y)}_{C([0,s])}\dd{\tilde{\mu}_0^y(\zeta)}\dd{y}\dd{s}.
\end{multline*}
Integrating with respect to $\dd{\tilde{\mu}_0^x}\dd{x}$ yields
\begin{multline*}
 \int_{I}\int_{\R^d}\norm{Z_\cdot^{x}(\zeta^{\text{in}},\W,\mu_0^x)-Z_\cdot^{x}(\zeta^{\text{in}},\tilde{\W},\tilde{\mu}_0^x)}_{C([0,t])}\dd{\tilde{\mu}_0^x(\zeta^{\text{in}})}\dd{x}\\
 \shoveleft{\leq \ee^{L_K t}\dist^I(\mu_0,\tilde{\mu}_0)+tL_K \norm{\W-\tilde{\W}}_{L^1(I^2)}}\\*
 +2L_K\int_{0}^{t}\int_{I}\int_{\R^d}\norm{Z^y_{\cdot}(\zeta,\W,\mu_0^y)-Z^y_{\cdot}(\zeta,\tilde{\W},\tilde{\mu}_0^y)}_{C([0,s])}\dd{\tilde{\mu}_0^y(\zeta)}\dd{y}\dd{s}.
\end{multline*}
The claim then follows from Gronwall's inequality.
\end{proof}

\subsection{Dobrushin's estimate}

We are now prepared to state and proof the generalisation of Dobrushin's estimate in the case of non-Lipschitz graph sequences.

\begin{proposition}[Dobrushin's estimate]\label{Prop:Dobrushin:non:Lip}
 Assume that $K$ satisfies \eqref{Ass:K1}--\eqref{Ass:K3} and that $\W$ satisfies \eqref{eq:W:cont}. For $\mu^x_0,\tilde{\mu}^x_0\in \mathcal{P}_{1}(\R^d)$ let $\mu_{t}^{x}=Z_{t}^{x}(\cdot,\mu^x_0)\#\mu^x_0$ and $\tilde{\mu}_{t}^{x}=Z_{t}^{x}(\cdot,\tilde{\mu}^x_0)\#\tilde{\mu}^{\text{in}}$ with $Z$ be given by Proposition~\ref{Prop:ex:charact:flow:non:Lip}. Then 
 \begin{equation*}
   \dist^{I}(\mu_t^{\cdot},\tilde{\mu}_{t}^{\cdot})\leq 2\dist^I(\mu_0,\tilde{\mu}_0)\ee^{2L_K t}+\frac{1}{2}\norm{\W-\tilde{\W}}_{L^1(I^2)}(\ee^{2L_K t}-1).
 \end{equation*}
\end{proposition}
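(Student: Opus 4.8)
The plan is to adapt the proof of Proposition~\ref{Prop:Dobrushin} to the fibred setting, working separately on each fibre $x\in I$ — which is legitimate since $\dist^I(\mu_t^\cdot,\tilde\mu_t^\cdot)=\int_I\dist(\mu_t^x,\tilde\mu_t^x)\dd{x}$ — so as to accommodate the $L^1$-structure of $\dist^I$ and the fact that, by Lemma~\ref{Lem:Lip:flow:non:Lip}, the characteristics are Lipschitz in the initial datum $\zeta^{\text{in}}$ but, in the absence of a Lipschitz condition on $\W$, no longer in $x$. Here it is understood that $\mu_t^x=Z_t^x(\cdot,\W,\mu_0^x)\#\mu_0^x$ and $\tilde\mu_t^x=Z_t^x(\cdot,\tilde\W,\tilde\mu_0^x)\#\tilde\mu_0^x$ with $\W,\tilde\W$ both satisfying \eqref{eq:W:cont}.

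First I would use the Kantorovich--Rubinstein formulation \eqref{eq:WI} together with the push-forward identity to write, for each $x\in I$,
\[
 \dist(\mu_t^x,\tilde\mu_t^x)=\sup_{\norm{\varphi}_{\text{Lip}}\leq 1}\abs[\bigg]{\int_{\R^d}\varphi\bigl(Z^x_t(\zeta,\W,\mu_0^x)\bigr)\dd{\mu_0^x(\zeta)}-\int_{\R^d}\varphi\bigl(Z^x_t(\zeta,\tilde\W,\tilde\mu_0^x)\bigr)\dd{\tilde\mu_0^x(\zeta)}}.
\]
Adding and subtracting $\int_{\R^d}\varphi(Z^x_t(\zeta,\W,\mu_0^x))\dd{\tilde\mu_0^x(\zeta)}$ splits the right-hand side into two pieces. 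In the first piece the integrand is the single fixed map $\zeta\mapsto\varphi(Z^x_t(\zeta,\W,\mu_0^x))$, which — using $\norm{\varphi}_{\text{Lip}}\leq 1$, $\abs{\varphi(Z^x_t(\zeta))-\varphi(Z^x_t(\tilde\zeta))}\leq\abs{Z^x_t(\zeta)-Z^x_t(\tilde\zeta)}\leq\norm{Z^x_\cdot(\zeta)-Z^x_\cdot(\tilde\zeta)}_{C([0,t])}$ and Lemma~\ref{Lem:Lip:flow:non:Lip} — is Lipschitz in $\zeta$ with constant $\ee^{L_Kt}$; hence this piece is bounded by $\ee^{L_Kt}\dist(\mu_0^x,\tilde\mu_0^x)$, and integrating over $x$ contributes $\ee^{L_Kt}\dist^I(\mu_0,\tilde\mu_0)$.

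For the second piece I would estimate, again using that $\varphi$ is $1$-Lipschitz, $\abs{\varphi(Z^x_t(\zeta,\W,\mu_0^x))-\varphi(Z^x_t(\zeta,\tilde\W,\tilde\mu_0^x))}\leq\norm{Z^x_\cdot(\zeta,\W,\mu_0^x)-Z^x_\cdot(\zeta,\tilde\W,\tilde\mu_0^x)}_{C([0,t])}$, integrate against $\dd{\tilde\mu_0^x(\zeta)}$ and then over $x\in I$, and feed the result into Proposition~\ref{Prop:cont:dep:flow:non:Lip} verbatim; this contributes $\dist^I(\mu_0,\tilde\mu_0)(2\ee^{2L_Kt}-\ee^{L_Kt})+\tfrac{1}{2}\norm{\W-\tilde\W}_{L^1(I^2)}(\ee^{2L_Kt}-1)$. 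Summing the two contributions, the terms $\pm\ee^{L_Kt}\dist^I(\mu_0,\tilde\mu_0)$ cancel and there remains $2\ee^{2L_Kt}\dist^I(\mu_0,\tilde\mu_0)+\tfrac{1}{2}\norm{\W-\tilde\W}_{L^1(I^2)}(\ee^{2L_Kt}-1)$, which is the assertion.

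I do not expect a genuinely hard step here; the one point that needs care — in contrast to the Lipschitz case — is to keep the supremum over test functions \emph{inside} the $x$-integral and never invoke joint Lipschitz continuity of $(\zeta,x)\mapsto Z^x_t(\zeta)$, which is no longer available. The comparison always takes place between probability measures on $\R^d$ sitting over the same fibre $x$, and the mismatch between the two $x$-dependencies — encoded in the difference of $\W$ and $\tilde\W$ — is precisely what Proposition~\ref{Prop:cont:dep:flow:non:Lip} absorbs through the $\norm{\W-\tilde\W}_{L^1(I^2)}$ term. A final bookkeeping check is that the constants from Lemma~\ref{Lem:Lip:flow:non:Lip} and Proposition~\ref{Prop:cont:dep:flow:non:Lip} combine to exactly $2\ee^{2L_Kt}$, with none of the $\sqrt2\max\{1,L_\W\}$ prefactor of the Lipschitz version, since no Lipschitz constant of the graphon enters this estimate.
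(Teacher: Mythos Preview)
Your proposal is correct and follows essentially the same route as the paper's proof: write $\dist^I$ fibrewise via Kantorovich--Rubinstein, split by adding and subtracting $\int\varphi(Z^x_t(\zeta,\W,\mu_0^x))\dd{\tilde\mu_0^x(\zeta)}$, bound the first piece via Lemma~\ref{Lem:Lip:flow:non:Lip} and the second via Proposition~\ref{Prop:cont:dep:flow:non:Lip}. Your bookkeeping of the constants (the cancellation $\ee^{L_Kt}+(2\ee^{2L_Kt}-\ee^{L_Kt})=2\ee^{2L_Kt}$) is in fact cleaner than the paper's intermediate line, which records $\ee^{2L_KT}$ for the first piece but arrives at the same final bound.
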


\begin{proof}
 By definition, we have
 \begin{equation*}
  \dist^{I}(\mu_t^{\cdot},\tilde{\mu}_{t}^{\cdot})=\int_{I}\sup_{\norm{\varphi}_{\text{Lip}}\leq 1}\abs[\bigg]{\int_{\R}\varphi \dd{\mu}_t^x-\int_{\R}\varphi \dd{\tilde{\mu}}_t^x}\dd{x}
 \end{equation*}
By definition of $\mu$ and $\tilde{\mu}$ we have together with Lemma~\ref{Lem:Lip:flow:non:Lip} that
\begin{multline*}
   \dist^{I}(\mu_t^{\cdot},\tilde{\mu}_{t}^{\cdot})=\int_{I}\sup_{\norm{\varphi}_{\text{Lip}}\leq 1}\abs[\bigg]{\int_{\R^d}\varphi(Z_{t}^{x}(\zeta,\W,\mu^x_0)) \dd{\mu_0^x(\zeta)}-\int_{\R^d}\varphi(Z_{t}^{x}(\zeta,\tilde{\W},\tilde{\mu}^x_0)) \dd{\tilde{\mu}_0^x(\zeta)}}\dd{x}\\*
   \leq \int_{I}\sup_{\norm{\varphi}_{\text{Lip}}\leq 1}\abs[\bigg]{\int_{\R^d}\varphi(Z_{t}^{x}(\zeta,\W,\mu^x_0)) \dd{\mu_0^x(\zeta)}-\int_{\R^d}\varphi(Z_{t}^{x}(\zeta,\W,\mu^x_0)) \dd{\tilde{\mu}_0^x(\zeta)}}\dd{x}\\*
   +\int_{I}\sup_{\norm{\varphi}_{\text{Lip}}\leq 1}\abs[\bigg]{\int_{\R^d}\varphi(Z_{t}^{x}(\zeta,\W,\mu^x_0))-\varphi(Z_{t}^{x}(\zeta,\tilde{\W},\tilde{\mu}^x_0)) \dd{\tilde{\mu}_0^x(\zeta)}}\dd{x}\\*
   \leq \ee^{2L_K T}\dist^I(\mu_0,\tilde{\mu}_0)+\int_{I}\int_{\R^d}\abs{Z_{t}^{x}(\zeta,\W,\mu^x_0)-Z_{t}^{x}(\zeta,\tilde{\W},\tilde{\mu}^x_0)} \dd{\tilde{\mu}_0^x(\zeta)}\dd{x}.
\end{multline*}
By means of Proposition~\ref{Prop:cont:dep:flow:non:Lip} we deduce
\begin{equation*}
   \dist^{I}(\mu_t^{\cdot},\tilde{\mu}_{t}^{\cdot})\leq 2\dist^I(\mu_0,\tilde{\mu}_0)\ee^{2L_K t}+\frac{1}{2}\norm{\W-\tilde{\W}}_{L^1(I^2)}(\ee^{2L_K t}-1).
\end{equation*}
\end{proof}

\subsection{The mean-field limit and particle approximation}

In this section, we provide the mean-field limit of \eqref{eq:disc:model:3} for non-Lipschitz graph sequences. Following the construction in \cite{KaM18}, the latter is approximated by the empirical measure of the discrete system \eqref{eq:disc:model:3} which is a consequence of Dobrushin's estimate and suitable intermediate approximations. 

More precisely, for a given $\W\colon I\times I \to \R$ satisfying \eqref{eq:W:cont}, we define weights according to \eqref{eq:average:weight}, i.e.\@
 \begin{equation*}
  w_{k\ell}^{N}=N^2\int_{I^N_k\times I^N_\ell}\W(x,y)\dd{x}\dd{y}.
 \end{equation*}
The sequence of weighted graphs $G^N=(V(G^N),E(G^N))$ is then given by the set of vertices $V(G^N)=[N]$ and edges $E(G^N)=\{(k,\ell)\in I\times I \;\vert\; w^N_{k\ell}\neq 0\}$. The corresponding sequence of (step) graphons is given by
\begin{equation}\label{eq:graph:seq:average}
 \W^N(x,y)=\sum_{k,\ell=1}^{N}w^N_{k\ell}\chi_{I_k^N}(x)\chi_{I_\ell^N}(y).
\end{equation}
We note that \cite[Lemma 3.3]{KaM18} ensures that $\W^N\to \W$ in $L^2(I)$ as $N\to\infty$.

 Following the approach in \cite{KaM18} one can construct a particle system satisfying \eqref{eq:disc:model:3} which approximates the mean-field limit. More precisely, for $n,m\in\N$ let $N=mn$ and consider the system
\begin{equation}\label{eq:approx:syst}
  \dot{\phi}^{N,(k-1)m+\ell}_t=\frac{1}{N}\sum_{i=1}^{n}\sum_{j=1}^{m}K_{t}(w_{k\ell}^{N},\phi|^{N,(k-1)m+\ell}_{[0,t]},\phi^{N,(i-1)m+j}|_{[0,t]});\qquad \phi^{N,(k-1)m+\ell}_0=\phi^{N,k \ell,\text{in}}
\end{equation}
for $k\in[n]$ and $\ell\in[m]$  with initial data $\phi^{N,k \ell,\text{in}}$ such that for fixed $k\in[n]$ the sequence $(\phi^{N,k \ell,\text{in}})_{\ell\in\N}$ consists of independent identically distributed random variables with distribution 
\begin{equation*}
 \mu_0^{n,k}=n\int_{I^n_k}\mu_0^{x}\dd{x}.
\end{equation*}
This defines a probability measure $\mathbb{P}_0^{n}$ on the measure space $\Omega^{n}=(((\R^d)^\infty)^n,\mathcal{B}((\R^d)^{\infty})^n)$

Consider the corresponding local empirical measure which is given by
\begin{equation}\label{eq:emp:meas:non:Lip}
 \mu^{n,m,x}_t=\sum_{i=1}^{n}\chi_{I^n_i}(x)\frac{1}{m}\sum_{j=1}^{m}\delta_{\phi^{N,(k-1)m+\ell}_t}.
\end{equation}
Let moreover
\begin{equation}\label{eq:lim:meas}
 \mu_t^{x}=Z_t^x(\cdot,\W,\mu_0^x)\#\mu_0^x
\end{equation}
with the characteristic $Z$ given by Proposition~\ref{Prop:ex:charact:flow:non:Lip}. Analogously to \cite{KaM18} we have the following approximation result:
\begin{theorem}\label{Thm:mean:field:2}
 Let $\mu_0^x\in \mathcal{P}^I_1(\R^d)$ be given and assume that $\W\colon I\times I\to \R$ satisfies \eqref{eq:W:cont}, $K$ satisfies \eqref{Ass:K1}--\eqref{Ass:K3} and let $w_{k\ell}^{\text{in}}$ be given by \eqref{eq:average:weight}. Let $\phi^{N,k}$ solve \eqref{eq:approx:syst} with initial datum $(\phi^{N,k,\text{in}})$. Let $ \mu^{n,m,x}_t$ be the corresponding empirical measure given by \eqref{eq:emp:meas:non:Lip}. Then $\mu^{n,m,\cdot}$ converges to the mean-field limit $\mu_t^x$ given by \eqref{eq:lim:meas},
 i.e.\@ $\dist^I(\mu^{n,m,\cdot},\mu^{\cdot})\to 0$ as $n,m\to\infty$. Moreover, $\mu$ satisfies the following weak equation:
 \begin{multline}\label{eq:mfl:2}
  \frac{\dd}{\dd{t}}\int_{I}\int_{\R^d}\varphi(\zeta)\dd{e_t\#\mu^x(\zeta)}\dd{x}\\*
  =\int_{I}\int_{C([0,T])}\int_{I}\int_{C([0,T])}\varphi'\circ e_t(\gamma)(e_tK_{\cdot})(\W(x,y),r_t\gamma,r_t\tilde{\gamma})\dd{\mu^y(\tilde{\gamma})}\dd{y}\dd{\mu^x(\gamma)}\dd{x}
 \end{multline}
\end{theorem}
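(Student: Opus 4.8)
The plan is to follow the blueprint of Section~\ref{Sec:Lip:graph} with the modifications already set up in this section. The proof splits into two parts: the convergence $\dist^I(\mu^{n,m,\cdot},\mu^{\cdot})\to 0$ and the verification of the weak equation \eqref{eq:mfl:2}. For the convergence I would insert an intermediate object between the empirical measure and the mean-field limit, exactly as in \cite{KaM18}. Concretely, I would first replace the discrete weights $w^N_{k\ell}$ by the step graphon $\W^N$ of \eqref{eq:graph:seq:average}, so that the particle system \eqref{eq:approx:syst} becomes an instance of the characteristic flow driven by $\W^N$ together with the purely atomic initial measure $\mu_0^{n,m,x}=\sum_i\chi_{I_i^n}(x)\frac1m\sum_j\delta_{\phi^{N,ij,\mathrm{in}}}$; this identification is the analogue of Proposition~\ref{Prop:repr:discr:sol} and follows from uniqueness in Proposition~\ref{Prop:ex:charact:flow:non:Lip} applied with graphon $\W^N$. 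Then I would estimate
\begin{equation*}
 \dist^I(\mu^{n,m,\cdot},\mu^{\cdot})\leq \dist^I\bigl((Z^\cdot(\cdot,\W^N,\mu_0^{n,m,\cdot}))\#\mu_0^{n,m,\cdot},(Z^\cdot(\cdot,\W,\mu_0^{n,m,\cdot}))\#\mu_0^{n,m,\cdot}\bigr)+\dist^I\bigl((Z^\cdot(\cdot,\W,\mu_0^{n,m,\cdot}))\#\mu_0^{n,m,\cdot},\mu^{\cdot}\bigr),
\end{equation*}
where the first term is controlled by Proposition~\ref{Prop:cont:dep:flow:non:Lip} via $\norm{\W^N-\W}_{L^1(I^2)}\to 0$ (which follows from \cite[Lemma~3.3]{KaM18}, since $L^2$-convergence implies $L^1$-convergence on the bounded domain $I^2$), and the second by Dobrushin's estimate (Proposition~\ref{Prop:Dobrushin:non:Lip}) applied with identical graphons, reducing it to $\dist^I(\mu_0^{n,m,\cdot},\mu_0^{\cdot})\to 0$.

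The control of $\dist^I(\mu_0^{n,m,\cdot},\mu_0^{\cdot})$ is where the probabilistic structure enters and is, I expect, the main obstacle — though it is essentially imported from \cite{KaM18}. I would split it as $\dist^I(\mu_0^{n,m,\cdot},\mu_0^{n,\cdot})+\dist^I(\mu_0^{n,\cdot},\mu_0^{\cdot})$, where $\mu_0^{n,x}=\sum_k\chi_{I_k^n}(x)\mu_0^{n,k}$ is the piecewise-constant ``block average'' of $\mu_0^x$. The second term converges to $0$ as $n\to\infty$ by a Lebesgue-differentiation/approximation argument for $\mathcal{P}^I_1(\R^d)$-valued maps: $x\mapsto\mu_0^x$ is measurable and $\dist$-integrable, so its block averages converge in $\dist^I$. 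The first term is handled, for each fixed $k$, by the law of large numbers in Wasserstein distance: the $(\phi^{N,k\ell,\mathrm{in}})_{\ell}$ are i.i.d.\ with law $\mu_0^{n,k}\in\mathcal{P}_1(\R^d)$, so $\frac1m\sum_{j=1}^m\delta_{\phi^{N,kj,\mathrm{in}}}\to\mu_0^{n,k}$ in $\dist$ almost surely (and in expectation) as $m\to\infty$; summing the $\chi_{I_k^n}$-weighted contributions and using a dominated-convergence argument (the first moments are uniformly bounded because $\mu_0^\cdot\in\mathcal{P}^I_1$) gives $\mathbb{E}[\dist^I(\mu_0^{n,m,\cdot},\mu_0^{n,\cdot})]\to 0$ as $m\to\infty$. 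Combining, one gets convergence $\mathbb{P}_0^n$-almost surely along a subsequence, or in probability/expectation, which is the sense in which the statement should be read.

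Finally, for the weak equation \eqref{eq:mfl:2} I would argue directly from the characteristic system \eqref{eq:charact:2:non:Lip} and the definition \eqref{eq:lim:meas}. Fix $\varphi\in C^1_b(\R^d)$; then
\begin{equation*}
 \int_I\int_{\R^d}\varphi(\zeta)\,\dd{e_t\#\mu^x(\zeta)}\,\dd{x}=\int_I\int_{\R^d}\varphi\bigl(Z_t^x(\zeta^{\mathrm{in}},\W,\mu_0^x)\bigr)\dd{\mu_0^x(\zeta^{\mathrm{in}})}\,\dd{x},
\end{equation*}
and differentiating under the integral sign — which is justified by Proposition~\ref{Prop:ex:charact:flow:non:Lip} (differentiability in $t$), the chain rule, the boundedness of $\del_t Z^x_t$ via \eqref{Ass:K1}, and dominated convergence — yields $\int_I\int_{\R^d}\varphi'(Z_t^x(\zeta^{\mathrm{in}}))\cdot\del_t Z_t^x(\zeta^{\mathrm{in}})\dd{\mu_0^x}\dd{x}$. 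Substituting the right-hand side of \eqref{eq:charact:2:non:Lip} for $\del_t Z_t^x$, then rewriting the $\mu_0^y$-integral over $\R^d$ as a $\mu^y$-integral over $C([0,T])$ through the identity $\mu^y=Z^y_\cdot(\cdot,\W,\mu_0^y)\#\mu_0^y$ and likewise the outer $\mu_0^x$-integral as a $\mu^x$-integral, gives precisely \eqref{eq:mfl:2}. The only genuinely delicate point in this last part is the interchange of $\frac{\dd}{\dd t}$ with the double integral, which the uniform bounds from \eqref{Ass:K1} and Lemma~\ref{Lem:Lip:flow:non:Lip} make routine.
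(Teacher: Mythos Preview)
Your proposal is correct and matches the paper's approach: the paper's own proof is merely a one-sentence pointer to ``suitable intermediate approximations in combination with Dobrushin's estimate (Proposition~\ref{Prop:Dobrushin:non:Lip})'' along the lines of \cite[Theorem~3.11]{KaM18}, and you have faithfully reconstructed exactly those intermediate steps (passage from $\W^N$ to $\W$ via Proposition~\ref{Prop:cont:dep:flow:non:Lip}, reduction to initial data via Proposition~\ref{Prop:Dobrushin:non:Lip}, and the block-average/LLN argument for $\dist^I(\mu_0^{n,m,\cdot},\mu_0^\cdot)$). Your derivation of the weak equation \eqref{eq:mfl:2} from the characteristics is likewise the intended one.
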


The proof of this statement is a consequence of suitable intermediate approximations in combination with Dobrushin's estimate (Proposition~\ref{Prop:Dobrushin:non:Lip}) and follows the same lines as the corresponding proof of \cite[Theorem~3.11]{KaM18} to where we refer for details. The same construction has also been used e.g.\@ in \cite{KuX22a}.

\section*{Acknowledgements}

The author thanks Rishabh Gvalani for a valuable discussion.

\end{document}